\documentclass[12pt,oneside]{amsart}
\usepackage{graphicx} 
\usepackage{amsmath}            
\usepackage{amsfonts}              
\usepackage{amsthm}                
\usepackage{verbatim}              
\usepackage{indentfirst}           

\usepackage[english]{babel}

\oddsidemargin=0in
\evensidemargin=0in
\textwidth=6.50in             
\headheight=10pt
\headsep=10pt
\topmargin=.5in
\textheight=8in

\DeclareTextSymbolDefault{\ae}{T1}

\usepackage{enumitem}
\usepackage{mathtools} 
\usepackage{amssymb}
\usepackage[all]{xy}
\usepackage{caption}
\usepackage{subcaption}
\usepackage{setspace}
\usepackage{mathrsfs}
\usepackage{pinlabel}
\usepackage[percent]{overpic}
\usepackage{float}
\usepackage{longtable}
\usepackage[colorlinks=false, hidelinks]{hyperref}
\usepackage[none]{hyphenat}

\numberwithin{equation}{section}

\newtheorem{thm}{Theorem}[section]

\newtheorem{definition}[thm]{Definition}

\makeatletter
\newcommand\addmark@section{\S}
\renewcommand*\@seccntformat[1]{%
  \csname addmark@#1\endcsname
  \csname the#1\endcsname\quad
}
\makeatother

\newtheorem{lem}[thm]{Lemma}
\newtheorem*{definition*}{Definition}
\newtheorem{cor}[thm]{Corollary}

\theoremstyle{remark}
\newtheorem{rmk}{Remark}

\theoremstyle{remark}

\theoremstyle{definition}

\newcommand{\congto}{\xrightarrow{\raisebox{-0.5ex}[0ex][0ex]{$\sim$}}}

\emergencystretch 4em

\makeatletter
\@namedef{subjclassname@2020}{\textup{2020} Mathematics Subject Classification}
\makeatother

\onehalfspacing
\begin{document}

\title[Dynamical Arakelov-Green's functions]{Arakelov--Green's functions for dynamical systems on projective varieties}
\author{Nicole R.~Looper}
\subjclass[2020]{Primary: 11G10, 37P30. Secondary: 14G40, 14K15, 37P55}
\date{}
\maketitle

\begin{abstract} We introduce functions associated to polarized dynamical systems that generalize averages of the dynamical Arakelov-Green's functions for rational functions due to Baker and Rumely \cite{BakerRumely:potentialthy}. For a polarized dynamical system $X\to X$ over a product formula field, we prove an Elkies-style lower bound for these functions evaluated on the adelic points of $X$. As an application, we prove a Lehmer-type lower bound on the canonical height of a non-torsion point $P$ on an abelian variety $A/K$, where $K$ is a product formula field having perfect residue fields at its completions (for instance, $K$ may be a number field or the function field of a curve over $\mathbb{C}$ or $\mathbb{F}_p$). For $A$ of dimension $g$, the lower bound has the form \[\hat{h}(P)\ge\frac{C}{D^{2g+3}(\log D)^{2g}},\] where $C=C(A,K,\hat{h})>0$, $D=[K(P):K]\ge 2$, and $P\in A(\overline{K})\setminus A(\overline{K})_{\textup{tors}}$ is not contained in a torsion translate of an abelian subvariety of $A$ having everywhere potential good reduction.
    
\end{abstract}

\section{Introduction}

In the arithmetic dynamics of rational functions $f:\mathbb{P}_K^1\to\mathbb{P}_K^1$ over a number field $K$, the dynamical Arakelov-Green's function introduced by Baker--Rumely \cite{BakerRumely:potentialthy} plays a significant role in results concerning points of small canonical height. From providing an avenue to proving the equidistribution of small sequences \cite[Theorem 10.38]{BakerRumely:book} to governing the prime support and fields of definition of preperiodic points \cite{Benedetto:preperiodic,HindrySilverman,Looper:UBCunicrit,Looper:UBCpolys,Petsche:Ih}, these bivariate functions on $\mathbb{P}^1\times\mathbb{P}^1\setminus\Delta$ find ample use. Their utility stems from the confluence of two fortuitous properties: first, their link to potential theory in one variable, and second, the simple fact that the logarithmic distance term that they involve is subject to the product formula. The latter fact forms what is by far the most frequent pivot of arguments which apply these Green's functions to global problems. 

The main purpose of this paper is to introduce generalized Arakelov-Green's functions associated to polarized dynamical systems in arbitrary dimension, and to prove a lower bound for these functions (Theorem \ref{thm:basisbound}) in terms of the geometry of homogeneous filled Julia sets. Strictly speaking, these generalized Arakelov-Green's functions generalize the \emph{averages} of Arakelov-Green's functions that are so often used in the dimension one setting, rather than the functions themselves. Theorem \ref{thm:basisbound} has the following adelic corollary when $K$ is a product formula field. (See Definition \ref{def:prodformula} for the definition of a product formula field. Examples include number fields and the function field of a normal irreducible projective variety.) 

\begin{thm}[cf.~Theorem \ref{thm:mainthm}]\label{mainthm:AGhigherdim} Let $K$ be a product formula field with normalized system of absolute values $|\cdot|_v$ satisfying the product formula, and let $X$ be a projective variety over $K$ with $\mathrm{dim}(X)\ge1$. Suppose $\phi:X\to X$ is a polarized dynamical system on $X$, defined over $K$ (cf.~\S2), and let $f:\mathbb{P}_K^N\to\mathbb{P}_K^N$ be a morphism of $\mathbb{P}_K^N$ that extends $\phi$ via an embedding $\iota:X\hookrightarrow\mathbb{P}_K^N$. For each $n\in\mathbb{Z}_+$, let $\mathcal{B}_n$ be any basis for $H^0(X,\iota^*(\mathcal{O}(n)))$, and let $d_{\mathcal{B}_n}(\mathcal{K}_v\cap\pi_v^{-1}(X_v))$ be as in Definition \ref{def:dBn}. Then there is a constant $C=C(f)$ such that for all $n\ge 2$, \[\sum_{v\in M_K}\log d_{\mathcal{B}_n}(\mathcal{K}_v\cap\pi_v^{-1}(X_v))\le\frac{C\log n}{n}.\] \end{thm}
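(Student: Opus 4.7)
\emph{Plan.} The statement is an adelic corollary of the local Elkies-style lower bound of Theorem \ref{thm:basisbound}, so the natural strategy is to apply that local inequality at every place of $K$ simultaneously and cancel the resulting left-hand sides using the product formula.

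At each place $v \in M_K$, Theorem \ref{thm:basisbound} should give, for an $M$-tuple of points $Q_1,\ldots,Q_M \in X(\overline{K_v})$, an inequality of the form
\[
\frac{1}{M(M-1)}\sum_{i\ne j} G_{\mathcal{B}_n,v}(Q_i,Q_j) \;\ge\; \log d_{\mathcal{B}_n}(\mathcal{K}_v\cap\pi_v^{-1}(X_v)) - \varepsilon_v(n),
\]
where $G_{\mathcal{B}_n,v}$ is the local generalized Arakelov--Green function associated to $\mathcal{B}_n$ and $f$, and $\varepsilon_v(n)=O(\log n/n)$ is a Chebyshev-to-Fekete discretization error intrinsic to the definition of $d_{\mathcal{B}_n}$. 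At all but finitely many $v$ (the ``bad'' places of $f$ and of $\iota(X)$), $\mathcal{K}_v$ reduces to a standard unit-polydisk cone, the lower bound is sharp, and $\varepsilon_v(n)=0$.

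Next I would pick a Galois-stable set $Q_1,\ldots,Q_M \in X(\overline{K})$, of cardinality $M=M(n)$ large enough that the discretization error remains $O(\log n/n)$, and sum the displayed inequality over $v$. Since each $G_{\mathcal{B}_n,v}(Q_i,Q_j)$ should be the log of a pairing of global sections of $\iota^*\mathcal{O}(n)$ evaluated at $Q_i, Q_j$, the product formula annihilates the left-hand side pairwise, yielding
\[
\sum_{v \in M_K} \log d_{\mathcal{B}_n}(\mathcal{K}_v\cap\pi_v^{-1}(X_v)) \;\le\; \sum_v \varepsilon_v(n) \;\le\; \frac{C(f)\log n}{n},
\]
with the right-hand sum finite since only the finitely many bad $v$ contribute.

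The main obstacle is the global test-point step: constructing $Q_i \in X(\overline{K})$ at which every pairing appearing in the bivariate sum is simultaneously nonzero at every $v$ (so the logs are finite and the product formula applies termwise), while retaining Galois-stability and the required cardinality $M(n)$. A cleaner alternative, sidestepping finite point choices, is to integrate against the adelic family of dynamical equilibrium measures on $\mathcal{K}_v \cap \pi_v^{-1}(X_v)$ and use that the bivariate energy integral of $G_{\mathcal{B}_n,v}$ against the canonical measure vanishes globally. Secondarily, tracking the $\log n/n$ rate requires a uniform-in-$n$ comparison of $d_{\mathcal{B}_n}$ with the corresponding Chebyshev constant, which should follow from the Macaulay/resultant-style definition of $d_{\mathcal{B}_n}$.
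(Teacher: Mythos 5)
Your overall shape (local bound at each place, only finitely many bad places contribute, product formula somewhere) is in the right spirit, but the proposal misidentifies both the content of Theorem \ref{thm:basisbound} and the role of the product formula, and the local inequality you posit is false as stated. First, there is no bivariate Green's function $G_{\mathcal{B}_n,v}(Q_i,Q_j)$ in this setting: the paper's generalized Arakelov--Green's functions are functions of $c(n)$-tuples (they generalize \emph{averages} of the dimension-one Green's functions, not the bivariate functions themselves), and Theorem \ref{thm:basisbound} is a pointwise Hadamard-type upper bound on the $c(n)\times c(n)$ determinant of the \emph{special} basis $H(n)$ of Definition \ref{def:Hn} evaluated at arbitrary tuples in $\mathcal{K}_v\cap\pi_v^{-1}(X_v)$; it holds because the elements of $H(n)$ are built from iterates of $F$ and hence have absolute value at most $\max\{R_v,1\}^{C'\log n}$ on the filled Julia set. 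Second, your local inequality is asserted for an \emph{arbitrary} basis $\mathcal{B}_n$ with error $\varepsilon_v(n)=O(\log n/n)$ uniformly in $v$; this cannot hold, since rescaling $\mathcal{B}_n$ at a single place makes $d_{\mathcal{B}_n,v}$ arbitrarily large (the paper discusses exactly this point). The correct use of the product formula is the opposite of yours: it shows that the \emph{global sum} $\sum_v\log d_{\mathcal{B}_n}(\mathcal{K}_v\cap\pi_v^{-1}(X_v))$ is independent of the choice of basis (a change of basis multiplies every determinant by a fixed element of $K^*$, whose contributions cancel across places), so one may replace $\mathcal{B}_n$ by $H(n)$; then Theorem \ref{thm:basisbound}, applied over $\mathbb{C}_v$, directly gives $\log d_{H(n),v}\le C\max\{\log R_v,1\}\log n/n$ at every place, and by Remark \ref{rmk:KS} this is $\le 0$ at the places of good reduction, so the sum over $v$ is a finite sum bounded by $C\log n/n$.

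Third, even granting your formalism, the cancellation step fails: for global points $Q_i$ the sum over all places of the Green's-function terms does not vanish by the product formula; it produces the $\widehat{H}_F$ (canonical height) contributions, which are nonnegative and nonzero unless the $Q_i$ are preperiodic. So you would additionally need Galois-stable (or height-zero) test tuples with nonvanishing determinants at every place---the obstacle you flag but do not resolve. In the paper's proof of this theorem no global test points are needed at all (such point-selection arguments only enter later, in Corollary \ref{cor:transfin} and Lemma \ref{lem:suitablemultiples}): since $d_{\mathcal{B}_n}$ is a supremum over tuples of local points of the Julia set, a pointwise determinant bound already bounds it, and the hard content---which your proposal leaves unproved---is precisely the construction via Macaulay's elimination theorem of a basis $H(n)$ whose members are uniformly small on $\mathcal{K}_v$ with the $\log n$ exponent.
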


In \S\ref{section:Lehmer}, we use Theorem \ref{mainthm:AGhigherdim} along with pluripotential-theoretic results due to Boucksom--Eriksson \cite[Theorem D]{BoucksomEriksson} and Boucksom--Gubler--Martin \cite[Corollary D]{BoucksomGublerMartin} (see also \cite[Theorem B]{GGJKM}) in order to prove a Lehmer-type theorem for the N\'{e}ron--Tate height of non-torsion points on an abelian variety $A/K$, where $K$ is a product formula field having perfect residue fields at its completions.

\begin{thm}[cf.~Theorem \ref{thm:Lehmerbdabvar}]\label{introthm:Lehmerbdabvar} Let $K$ be a product formula field having perfect residue fields at its completions (for example, $K$ may be a number field or the function field of a curve over $\mathbb{C}$ or $\mathbb{F}_p$), and let $A/K$ be an abelian variety of dimension $g$. Fix a symmetric ample line bundle $\mathcal{L}$ on $A$ and let $\hat{h}_\mathcal{L}$ be the associated canonical height function. There is a constant $C=C(A,K,\mathcal{L})>0$ such that for any $P\in A(\overline{K})\setminus A(\overline{K})_{\textup{tors}}$ that is not contained in any torsion translate of an abelian subvariety of $A\times_K K'$ having everywhere potential good reduction for some finite extension $K'/K$, we have \[\hat{h}_\mathcal{L}(P)\ge\frac{C}{D^{2g+3}(\log D)^{2g}},\] where $D=[K(P):K]\ge2$.\end{thm}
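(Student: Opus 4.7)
The plan is to deduce the Lehmer-type bound from Theorem~\ref{mainthm:AGhigherdim} via a Fekete--Szeg\H{o}/interpolation argument applied to the Galois orbit of $P$. One would apply Theorem~\ref{mainthm:AGhigherdim} to the polarized dynamical system $([m],\mathcal{L})$ on $A$ for a fixed integer $m\ge 2$ (so that $[m]^{*}\mathcal{L}\cong\mathcal{L}^{\otimes m^{2}}$), via a projective embedding $\iota:A\hookrightarrow\mathbb{P}^{N}_{K}$ coming from sections of $\mathcal{L}^{\otimes k}$ and some extension $f$ of $[m]$ to a morphism of $\mathbb{P}^{N}$.

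Write $\Sigma=\{P_{1},\dots,P_{D}\}$ for the Galois orbit of $P$ and $N_{n}+1=\dim H^{0}(A,\mathcal{L}^{\otimes n})\asymp n^{g}$. The first step would be, for parameters $n$ and $t$ to be optimised at the end (with $t^{g}D\lesssim N_{n}$), to build a basis $\mathcal{B}_{n}$ of $H^{0}(A,\iota^{*}\mathcal{O}(n))$ adapted to $\Sigma$: a Gram--Schmidt/interpolation construction arranging the basis into vanishing packets at the points of $\Sigma$ up to multiplicity $t$. For any such basis, one expects $d_{\mathcal{B}_{n}}(\mathcal{K}_{v}\cap\pi_{v}^{-1}(X_{v}))$ to admit at each place $v$ a lower bound in terms of a generalised Vandermonde determinant of $\mathcal{B}_{n}$ evaluated along $\Sigma$, and hence through pairwise values $g_{\phi,v}(P_{i},P_{j})$ of the canonical local Green's function of the system. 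At Archimedean places this Fekete-type inequality would follow from classical pluripotential theory; at non-Archimedean places it would be furnished by the non-Archimedean Fekete theorem of Boucksom--Eriksson~\cite{BoucksomEriksson} together with the refinements of Boucksom--Gubler--Martin~\cite{BoucksomGublerMartin} and of \cite{GGJKM}. The perfect-residue-field hypothesis on $K$ is exactly what makes these non-Archimedean inputs available at every place.

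The next step would be to sum the local lower bounds over $v$ and invoke the product formula (via the adelic compatibility of the canonical Green's functions). The double sum $\sum_{v}\sum_{i\ne j}g_{\phi,v}(P_{i},P_{j})$ should collapse, using the identity $\sum_{i\ne j}\langle P_{i},P_{j}\rangle_{\mathcal{L}}=2\hat h_{\mathcal{L}}(\sum_{i}P_{i})-2D\hat h_{\mathcal{L}}(P)$ for the N\'eron--Tate pairing and the boundedness of $\hat h_{\mathcal{L}}(\sum_{i}P_{i})$ (since $\sum_{i}P_{i}\in A(K)$), to a quantity linear in $D\hat h_{\mathcal{L}}(P)$ up to bounded adelic error. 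Combined with the upper bound $\sum_{v}\log d_{\mathcal{B}_{n}}(\mathcal{K}_{v}\cap\pi_{v}^{-1}(X_{v}))\le C\log n/n$ of Theorem~\ref{mainthm:AGhigherdim}, and after balancing $n$ and $t$ as polynomial/logarithmic functions of $D$, one should arrive at the claimed bound $\hat h_{\mathcal{L}}(P)\ge C/(D^{2g+3}(\log D)^{2g})$. The exponent $2g+3$ will come from the dimensional scaling $N_{n}\asymp n^{g}$ combined with the interpolation multiplicity $t$, while the $(\log D)^{2g}$ factor will reflect the propagation of the $\log n/n$ term from Theorem~\ref{mainthm:AGhigherdim} through the optimisation.

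The hardest step will be establishing the local Fekete-type lower bound on $\log d_{\mathcal{B}_{n}}$ \emph{uniformly} in the orbit $\Sigma$, with error terms whose adelic sum is controllable --- this is exactly where the pluripotential-theoretic inputs of \cite{BoucksomEriksson,BoucksomGublerMartin,GGJKM} must do their work, and where the perfect-residue-field hypothesis enters. The non-degeneracy assumption on $P$ will also play a crucial role at this step: if $P$ lay in a torsion translate of an abelian subvariety $B\subseteq A\times_{K}K'$ with everywhere potential good reduction, then at every place $v$ the equilibrium measure $\mu_{\phi,v}$ would concentrate on (the base change of) $B$, the pairwise Green's function sums along $\Sigma$ would degenerate, and the Fekete lower bound would collapse; the non-degeneracy hypothesis rules out exactly this obstruction.
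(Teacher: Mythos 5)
There is a genuine gap at the heart of your plan: the step where you pass from $\log d_{\mathcal{B}_n}$ to pairwise values $g_{\phi,v}(P_i,P_j)$ along the Galois orbit. In dimension $>1$ the functions of this paper are \emph{not} sums of pairwise Green's functions; they are defined through determinants of bases of $H^0(X,\iota^*(\mathcal{O}(n)))$ (see (\ref{eqn:AGfn})), and precisely because no pairwise decomposition of such determinants is known, the paper proves the determinant bound of Theorem \ref{thm:basisbound} instead of an Elkies-type pairwise bound. There is no available ``generalised Vandermonde $\Rightarrow$ pairwise Green's functions'' inequality to invoke, and adapting the basis $\mathcal{B}_n$ to $\Sigma$ buys nothing globally, since by the product formula the adelic quantity in Theorem \ref{thm:mainthm} is independent of the basis. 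Your collapse of the double sum also relies on the claim that $\hat h_{\mathcal{L}}(\sum_i P_i)$ is bounded because $\sum_i P_i\in A(K)$; this is false, as $K$-rational points have unbounded height. Finally, the roles you assign to the hypotheses are not the ones that make the argument work: the perfect-residue-field assumption is not what enables the non-Archimedean Fekete theory (Boucksom--Eriksson and Boucksom--Gubler--Martin apply over arbitrary complete non-Archimedean fields), and the non-degeneracy hypothesis is not about equilibrium measures concentrating on a subvariety.

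The paper's actual mechanism is different and is the missing idea. One reduces to $A$ geometrically simple, takes the duplication map extended to $f:\mathbb{P}^N\to\mathbb{P}^N$, and works not with the Galois orbit but with the arithmetic progression of multiples $ke_w\Delta P$, where $\Delta$ is the order of the component group of the N\'eron model at a place $v_0$ of bad reduction (such a place exists by the non-degeneracy hypothesis). The perfect-residue-field assumption enters only through Halle--Nicaise control of component groups under base change, which guarantees these multiples all retract to the single point $\xi$ of the skeleton of $A_{v_0}^{\textup{an}}$, i.e.\ lie in the set $U$. Since torsion points (hence asymptotically Fekete tuples, by Theorem \ref{thm:eqmsreuniqueness} together with Corollary \ref{cor:transfin}) equidistribute to the Haar measure on the positive-dimensional skeleton, tuples drawn from $U$ cannot be asymptotically Fekete, yielding a uniform positive lower bound $C_2$ for $g_{n_l,v_0}$ on $U^{c(n_l)}$ along a subsequence. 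Lemma \ref{lem:suitablemultiples} supplies, among the first $2n^g+c(n)$ multiples, a $c(n)$-tuple with nonvanishing determinant, and then the product formula converts the global sum of the $g_{n,w}$ at that tuple into the average of the canonical heights of the chosen multiples; comparing with the lower bound $C_2/D-C_1\log n_l/n_l$ forces $n_l\ll D\log D$, and quadraticity of $\hat h_f$ on multiples of size $O(n^g)\cdot$(index) produces the exponent $2g+3$ and the $(\log D)^{2g}$ factor. Your proposal, as written, has no substitute for the $U$-versus-skeleton capacity gap, for Lemma \ref{lem:suitablemultiples}, or for the product-formula identity (\ref{eqn:greentoht}), and the pairwise/N\'eron--Tate route it leans on is exactly what is unavailable in this higher-dimensional setting.
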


In particular, for any geometrically simple abelian variety having at least one nonarchimedean place of stable bad reduction, Theorem \ref{introthm:Lehmerbdabvar} gives a height lower bound of the form $C/(D^{2g+3}(\log D)^{2g})$, where $P$ is any non-torsion point. The best known lower bound in the number field case is due to Masser \cite{Masser1, Masser3}, and is of the form $C/(D^{2g+1}(\log D)^{2g})$ for general $A$ (cf.~\cite{GaudronRemond,HindrySilverman:Lehmer,LooperSilverman,Masser2} for the development as well as related results). Theorem \ref{introthm:Lehmerbdabvar} also generalizes and strengthens a lower bound on the Bernoulli part of the height \cite{LooperSilverman} in the case of abelian surfaces over function fields of characteristic $0$. Outside of \cite{LooperSilverman}, there does not appear to be prior progress in the function field setting.

In the study of higher-dimensional dynamics, Arakelov-Green's functions have been used to prove arithmetic results surrounding points of small canonical height on higher genus curves embedded in their Jacobians (see for example \cite{Cinkir,DeJong,DeJong:NT,LooperSilvermanWilms}). The centerpiece of the approach here is Zhang's admissible pairing \cite{Zhang}, which is essentially the data of a Green's function at each place of a product formula field. This setup of a curve embedded into its Jacobian may be viewed as a hybrid of the dimension one and the higher-dimensional settings, since despite the fact that the ambient dynamical system is of dimension greater than one, it is the geometry of curves and their associated Green's functions that is being leveraged. Outside of this setting, progress in higher-dimensional arithmetic dynamics has largely been made via Arakelov theory. For example, Yuan's well-known equidistribution theorem \cite[Theorem 3.1]{Yuan} for polarized dynamical systems (which has since been generalized to \cite[Theorem 5.4.3]{YuanZhang}) utilizes Zhang's fundamental inequality on arithmetic volumes of metrized line bundles in conjunction with a variational argument originally due to Szpiro--Ullmo--Zhang \cite[p.~343--344]{SUZ}. 
While the framework of Arakelov theory is powerful and flexible, quantitative and/or uniform results in the higher-dimensional setting have nonetheless been generally hard to obtain. By constrast, arithmetic dynamics in dimension one is at this point well-populated with a number of such results (e.g., \cite{DeMarcoKriegerYe1,DeMarcoKriegerYe2,DeMarcoMavraki,HindrySilverman,Ingram,Looper:UBCpolys,Petsche}). A common reason behind the relative success in this latter setting is the fact that Elkies-type lower bounds on averages of Arakelov-Green's functions are known to hold, and moreover may often be made uniform and explicit in families of dynamical systems. For the dynamical Arakelov-Green's functions introduced by Baker and Rumely, such a lower bound is due to Baker \cite{Baker}. For the Arakelov-Green's functions underlying Zhang's admissible pairing, the analogous lower bound is a result due to Baker--Rumely \cite{BakerRumely:harmonic}, coming from harmonic analysis on metrized graphs. Each of these lower bounds is of the form \begin{equation}\label{keylowerbound}\sum_{P_i\ne P_j\in T}g(P_i,P_j)\ge-Cn\log n\end{equation} for any set $T=\{P_1,\dots,P_n\}$ of $n\ge2$ pairwise distinct points, and a constant $C$ that depends in a ``nice'' way on the space or dynamical system in question. Theorem \ref{thm:basisbound} provides an analogue to (\ref{keylowerbound}) in arbitrary dimension (an analogue which could be viewed as a generalization if different bases of $H^0(\mathbb{P}^1,\mathcal{O}(n))$ had been used in defining $g$ in (\ref{keylowerbound})). As will be explained in \S2, the effect of choosing different bases cancels out when one sums over all places of a product formula field, yielding Theorem \ref{mainthm:AGhigherdim}.

The proof strategy of Theorem \ref{mainthm:AGhigherdim} draws its inspiration from Baker's proof \cite[Theorem 1.1]{Baker} for rational functions on $\mathbb{P}^1$. Given a morphism $f:\mathbb{P}^N\to\mathbb{P}^N$ of degree at least $2$ over a valued field $K$ and a homogeneous lift $F:\mathbb{A}^{N+1}\to\mathbb{A}^{N+1}$ of $f$, one constructs, for each integer $n\ge 1$, a special basis $\mathcal{F}$ of $H^0(\mathbb{P}^N,\mathcal{O}(n))$ consisting of various products of coordinate functions of iterates of $F$. One then shows that these polynomials take relatively small values when evaluated on the homogeneous filled Julia set of $F$. Baker's proof that his distinguished collection of polynomials forms a basis may be viewed through three different lenses: one from intersection theory, one from syzygy theory, and one from elimination theory. When applied to the higher-dimensional setting, the intersection-theoretic approach proves the linear independence of a small subset $\mathcal{F}'$ of the analogous collection $\mathcal{F}$, a subset whose order grows as $o(h^0(\mathbb{P}^N,\mathcal{O}(n)))$ as $n$ tends to infinity. While $\mathcal{F}'$ may seem adequate from a number of different points of view, there are basic desirable properties of the transfinite diameter (which is the concept underlying the Arakelov-Green's functions $g_n$) that are lost when one works with $\mathcal{F}'$, such as translation-invariance. On the other hand, from a more algebraic angle, the reader familiar with the theory of syzygies will see a potential approach to generalizing (\ref{keylowerbound}) via Koszul complexes. Alas, despite appearances, it is far from clear how to rule out certain degenerate syzygies that would prevent the analogous $\mathcal{F}$ from forming a basis of $H^0(\mathbb{P}^N,\mathcal{O}(n))$. Finally, Macaulay's resultant and its concomitant ``Main Theorem of Elimination Theory" \cite[\S3]{Macaulay} points to a natural collection $\mathcal{F}''$ of elements of $H^0(\mathbb{P}^N,\mathcal{O}(n))$ closely related to $\mathcal{F}$, one which may be shown to span $H^0(\mathbb{P}^N,\mathcal{O}(n))$. (This perspective also elucidates the difficulties that arise with the approach using syzygies, as the relevant Macaulay matrix in the dimension one setting is square, while in general it has more rows than columns. Its left null space is the syzygy space; see \cite[\S2-3]{BatselierDreesenDeMoor} for more details.) Discarding any redundant vectors leads to a basis $H(n)$ for $H^0(\mathbb{P}^N,\mathcal{O}(n))$, and accordingly we prove a lower bound of the form (\ref{keylowerbound}) for the particular basis $H(n)$ (Theorem \ref{thm:basisbound}). However, as there is no way to know which vectors must be discarded---in particular, different vectors may need to be discarded for different maps---we are unable for the moment to relate the determinants corresponding to $H(n)$ to the generalized Vandermonde determinants corresponding to the standard monomial basis. From a number-theoretic perspective, this obstacle is of little consequence: arithmetic applications in fact tend to rely on the aforementioned adelic version of (\ref{keylowerbound}), and this adelic version is independent of the choice of basis for $H^0(\mathbb{P}^N,\mathcal{O}(n))$. For further details, we refer the reader to \S\ref{section:mainthm}.
\newline

\textbf{Acknowledgements}: The author would like to thank Matt Baker, Joe Silverman, and Kevin Tucker for helpful conversations about this project, as well as Rob Benedetto and Laura DeMarco for useful comments on a draft of this article. This research was supported by NSF grant DMS-2302586 and a Sloan Research Fellowship.

\section{Definitions, Background, and Basis Formation}

Let $K$ be a field. A \emph{polarized dynamical system} (over $K$) is a triple $(X,\phi,\mathcal{L})$ consisting of a projective variety $X/K$, an endomorphism $\phi:X\to X$, and an ample line bundle $\mathcal{L}$ on $X$ such that $\phi^*(\mathcal{L})\cong\mathcal{L}^d$ for some integer $d\ge2$. By a well-known theorem of Fakhruddin \cite[Corollary 2.2]{Fakhruddin}, such a polarized dynamical system $\phi$ may be extended to a degree $d$ morphism $f$ of a suitable ambient projective space $\mathbb{P}^N$. In other words, there is an integer $N$, an embedding $\iota:X\hookrightarrow\mathbb{P}^N$ and a morphism $f:\mathbb{P}^N\to\mathbb{P}^N$ defined over $K$ such that the diagram \begin{equation}\label{eqn:Fakhruddin}
\xymatrix{
	{X} \ar[rr]^{\phi} \ar@{^{(}->}[d]^{\iota}
	&& {X} \ar@{^{(}->}[d]^{\iota} \ar@{->}[d]\\
	{\mathbb{P}^N} \ar[rr]^f
	&& {\mathbb{P}^N} 
}
\end{equation} is commutative. The embedding $\iota:X\to\mathbb{P}(H^0(X,\mathcal{L}^m))$ is given by evaluating a fixed basis for the global sections of $\mathcal{L}^m$ at the points of $X$ for an appropriate $m\in\mathbb{Z}_+$. By the Weil height machine \cite[Theorem B.3.2]{HindrySilverman:book}, for any height $h_{X,\mathcal{L}^m}$ with respect to $\mathcal{L}^m$ on $X$, we have \[h(\iota(P))=h_{X,\mathcal{L}^m}(P)+O(1)\] for all $P\in X(\overline{K})$, where $h$ is the standard Weil height on $\mathbb{P}^N$. Hence by Tate's telescoping argument \cite[pp.~195--196]{HindrySilverman:book}, \[\hat{h}_f(\iota(P)):=\lim_{n\to\infty}\frac{1}{d^n}h(f^n(\iota(P)))=\lim_{n\to\infty}\frac{1}{d^n}h_{X,\mathcal{L}^m}(\phi^n(P))=:\hat{h}_{X,\mathcal{L}^m,\phi}(P).\]Since by \cite[Theorem B.4.1(ii)]{HindrySilverman:book} we have $\hat{h}_{X,\mathcal{L}^m,\phi}(P)=m\hat{h}_{X,\mathcal{L},\phi}(P)$ for all $P\in X(\overline{K})$, this yields \begin{equation}\label{eqn:htmult}\hat{h}_f(\iota(P))=m\hat{h}_{X,\mathcal{L},\phi}(P).\end{equation} The polarized dynamical system $(X,\mathcal{L},\phi)$ satisfying (\ref{eqn:Fakhruddin}) may be viewed as a pair $(X,f)$ consisting of a morphism $f:\mathbb{P}^N\to\mathbb{P}^N$ and a subvariety $X\subseteq\mathbb{P}^N$ such that $f|_X$ has image contained in $X$, i.e., such that $f$ leaves $X$ invariant. In the reverse direction, any such pair $(X,f)$ induces the polarized dynamical system $(X,\iota^*(\mathcal{O}(1)),f|_X)$, where $\iota:X\hookrightarrow\mathbb{P}^N$ is the inclusion map. We thus frame our polarized dynamical systems as consisting of these pairs $(X,f)$. We assume throughout this article that $\textup{dim}(X)>0$.

\subsection{Generalized Arakelov-Green's functions and the transfinite diameter} 

Let $(X,f)$ be a polarized dynamical system of degree $d\ge2$ over $K$, satisfying a diagram as in (\ref{eqn:Fakhruddin}). A \emph{homogeneous lift} of $f$ is a map $F:\mathbb{A}^{N+1}\to\mathbb{A}^{N+1}$ such that if $F=(F_0,F_1,\dots,F_N)$, then $F_i$ is a degree $d$ homogeneous polynomial in $K[x_0,\dots,x_N]$ for all $0\le i\le N$ and $f=[F_0:F_1:\cdots:F_N]$. If $K$ is a valued field with absolute value $|\cdot|$ and $P\in\mathbb{A}^{N+1}(K)$, we write \[||F(P)||=\max\{|F_0(P)|,\dots,|F_N(P)|\}.\] We define the \emph{homogeneous filled Julia set} of $F$ to be the set \[\mathcal{K}=\{P\in\mathbb{A}^{N+1}(K): ||F^n(P)||\not\to\infty \text{ as }n\to\infty\}.\] Throughout, we will let \[\pi:\mathbb{A}^{N+1}\setminus\{(
0,\dots,0)\}\to\mathbb{P}^N\] denote the natural projection. When $\mathbb{A}^{N+1}$ and $\mathbb{P}^N$ are defined over a completion $K_v$ of a valued field $K$, we will write this map as $\pi_v$. 

We now build Arakelov--Green's functions associated to this dynamical system. Let $c(n)=\textup{dim}\,H^0(X,\iota^*(\mathcal{O}(n)))$. Given a choice of bases $\{\mathcal{B}_n\}_{n=1}^\infty$ of $H^0(X,\iota^*(\mathcal{O}(n)))$ respectively, there is a naturally induced such function \[g_n:X^{c(n)}\to\mathbb{R}\cup\{\infty\}\] for each $n$, constructed as follows. For a homogeneous lift $F:\mathbb{A}^{N+1}\to\mathbb{A}^{N+1}$ of $f$, let $\widehat{H}_F:\mathbb{A}^{N+1}\setminus\{0,\dots,0\}$ be given by \[\widehat{H}_F(P)=\lim_{n\to\infty}\frac{1}{d^n}\log||F^n(P)||.\] (That this limit exists is shown in \cite[proof of Theorem 9]{KawaguchiSilverman:Green}.) Let $\textup{Res}(F)$ be the Macaulay resultant of the coordinate functions $F_0,\dots,F_N$ defining $F$, and write \[r(F)=\frac{1}{d(d-1)(N+1)}\log|\mathrm{Res}(F)|.\] For $P\in X(K)$, write $\widetilde{P}$ for a lift of $P$ to $\mathbb{A}^{N+1}(K)$, with coordinates in $K$. The Arakelov-Green's function $g_n:X^{c(n)}\to\mathbb{R}\cup\{\infty\}$ associated to the basis $\mathcal{B}_n$ of $H^0(X,\iota^*(\mathcal{O}(n)))$ is given by \begin{equation}\label{eqn:AGfn}g_n\left(P_1,\dots,P_{c(n)}\right)=\frac{1}{c(n)}\sum_{i=1}^{c(n)}\widehat{H}_F(\widetilde{P_i})-\frac{1}{n\cdot c(n)}\log\left|\mathrm{det}\left(\eta_j(\widetilde{P_i})\right)_{\eta_j\in\mathcal{B}_n}\right|+r(F).\end{equation} It is readily verified that for a fixed $\mathcal{B}_n$, this function is independent of the choice of lifts $\widetilde{P_i}$ and $F$. Moreover, if $\mathcal{B}_n'$ is another basis for $H^0(X,\iota^*(\mathcal{O}(n)))$, then its associated Arakelov-Green's function differs by a constant from that associated to $\mathcal{B}_n$. In the special case of $X=\mathbb{P}^1$, $\iota=\textup{id}$, and $\mathcal{B}_n$ the standard monomial basis of $H^0(\mathbb{P}^1,\mathcal{O}(n))$, this $g_n$ coincides with the function $\mathscr{A}(g):(\mathbb{P}^1)^{n+1}\to\mathbb{R}\cup\{\infty\}$ given by \[\mathscr{A}(g)(P_1,\dots,P_{n+1})=\frac{1}{n(n+1)}\sum_{i\ne j}g(P_i,P_j)\] for $g$ the dynamical Arakelov-Green's functions introduced by Baker and Rumely (see \cite[\S3.4]{BakerRumely:potentialthy} and \cite[\S10.2]{BakerRumely:book}). 

We will show that there is a special sequence of bases $\{\mathcal{B}_n\}$ with associated Arakelov-Green's functions $g_n$ such that there is a constant $C=C(X,f)$ satisfying \begin{equation}\label{eqn:lowerboundshape} g_n\left(P_1,\dots,P_{c(n)}\right)\ge-\frac{C\log n}{n}\end{equation} for all $n\ge2$. By itself, this statement is contentless: since we always have the trivial lower bound \[g_n(P_1,\dots,P_{c(n)})\ge-\textup{diam}(\mathcal{K}), \] one can easily rescale any given bases $\mathcal{B}_n$ in order to artificially achieve (\ref{eqn:lowerboundshape}). However, it turns out that when $K$ is a product formula field (cf.~Definition \ref{def:prodformula}), and $g_{n,v}$ is the Arakelov-Green's function associated to our special $\mathcal{B}_n$ at the place $v$, then there is a $C$ such that for \emph{all} $v\in M_K$, and all $n\ge 2$, \begin{equation}\label{eqn:lowerboundforv}g_{n,v}\left(P_1,\dots,P_{c(n)}\right)\ge -\frac{C\log n}{n}.\end{equation} (This is the content of Theorem \ref{thm:basisbound}.) Such a lower bound may not simply be achieved by rescaling any given basis, as the product formula implies that the local effects of any rescaling cancel out across all places. This provides some justification for viewing these lower bounds as living more naturally in a global, rather than a local setting. One may take $C=0$ in (\ref{eqn:lowerboundforv}) for all but finitely many places $v$ (cf.~Remark \ref{rmk:KS}), so the corresponding global lower bound becomes \begin{equation}\label{eqn:lowerboundglobal}\sum_{v\in M_K}\max_{\left(P_1,\dots,P_{c(n)}\right)\in X(\mathbb{C}_v)^{c(n)}}g_{n,v}\left(P_1,\dots,P_{c(n)}\right)\ge -\frac{C'\log n}{n}\end{equation} for $C'=C'(X,f)$.

Finally, we reformulate the problem of proving (\ref{eqn:lowerboundforv}) and (\ref{eqn:lowerboundglobal}) in terms of quantitative convergence to the transfinite diameter of the $v$-adic homogeneous filled Julia set $\mathcal{K}_v$ of a lift $F:\mathbb{A}_K^{N+1}\to\mathbb{A}_K^{N+1}$ of $f$. As $g_{n,v}\left(P_1,\dots,P_{c(n)}\right)$ is well-defined independently of the choices of lifts $\widetilde{P_i}\in\mathbb{A}_{\mathbb{C}_v}^{N+1}$ and $F\in K[x_0,\dots,x_N]$, we may suppose without loss (by passing to an extension if necessary) that $r(F)=0$ in (\ref{eqn:AGfn}) and that given such a lift $F$ with $r(F)=0$, lifts $\widetilde{P_i}\in\mathbb{A}_{\overline{K}}^{N+1}$ of the $P_i$ are chosen so that $\widehat{H}_F(P_i)=0$ for all $i$. We write $X_v=X\times_K\mathbb{C}_v$, and \[\pi_v:\mathbb{A}_{\mathbb{C}_v}^{N+1}\setminus\{(0,\dots,0)\}\to\mathbb{P}_{\mathbb{C}_v}^N\] for the natural projection map. The $v$-adic homogeneous set of $F$ is defined as \[\mathcal{K}_v=\{P\in\mathbb{A}^{N+1}(\mathbb{C}_v):||F^n(P)||_v\not\to\infty\textup{ as }n\to\infty\}.\] \begin{definition}\label{def:dBn} Given a basis $\mathcal{B}_n$ of $H^0(X,\iota^*(\mathcal{O}(n)))$ of order $c(n)$, and $v\in M_K$, let \begin{equation*}d_{\mathcal{B}_n}(\mathcal{K}_v\cap\pi_v^{-1}(X_v))=\exp\left(\max_{\substack{\left(\widetilde{P_1},\dots,\widetilde{P_{c(n)}}\right)\\\in(\mathcal{K}_v\cap\pi_v^{-1}(X_v))^{c(n)}}}\frac{1}{n\cdot c(n)}\log\left|\mathrm{det}\left(\eta_j(\widetilde{P_i})\right)_{\eta_j\in\mathcal{B}_n}\right|_v\right).\end{equation*}  \end{definition} In view of the preceding discussion and Definition \ref{def:dBn}, (\ref{eqn:lowerboundglobal}) becomes \begin{equation}\label{eqn:globalupperbdtransfin}\sum_{v\in M_K}\log d_{\mathcal{B}_n}(\mathcal{K}_v\cap\pi_v^{-1}(X_v))\le\frac{C'\log n}{n}.\end{equation} By the product formula, the left-hand side of (\ref{eqn:globalupperbdtransfin}) is independent of the choice of $\mathcal{B}_n$.  

\subsection{The special basis}

Let $(X,f)$ be a polarized dynamical system as in (\ref{eqn:Fakhruddin}) over a valued field $K$, and let $F=[F_0:\cdots:F_N]$ be a homogeneous lift of $f$ to $\mathbb{A}^{N+1}(K)$. We now form a special spanning set of $H^0(X,\iota^*(\mathcal{O}(n)))$ for each $n$. Any linearly independent subset of this spanning set having cardinality $h^0(X,\iota^*(\mathcal{O}(n)))$ will count as a ``special" basis $\mathcal{B}_n$. Consider the collection $\mathcal{G}$ of polynomials of the form $\left(F_i^{(k)}\right)^j$, where $0\le i\le N$, $k\in\mathbb{Z}_+$, and $1\le j\le d-1$. For $n\ge d(N+1)$ an integer, let $\lfloor n\rfloor_{\mathcal{G}}$ denote the largest element $n'$ of the set \[\mathrm{deg}(\mathcal{G}):=\{\textup{deg}(\Psi):\Psi\in\mathcal{G}\}\] such that \begin{equation}\label{eqn:lmax} (N+1)n'\le n.\end{equation} By the maximality of $\lfloor n\rfloor_{\mathcal{G}}$ subject to the condition (\ref{eqn:lmax}), and the fact that if $n''$ is the smallest element of $\deg(\mathcal{G})$ greater than $n'$, then $n''\le 2n'$, we have that there is an integer $n_0$ such that for all $n\ge n_0$, \begin{equation*}\lfloor n\rfloor_{\mathcal{G}}\ge\frac{1}{2}\cdot\frac{n}{N+1}=\frac{n}{2N+2}.\end{equation*} Hence \begin{equation}\label{eqn:keyratio}  n-\lfloor n\rfloor_{\mathcal{G}}\le\frac{2N+1}{2N+2}n\end{equation} for all $n\ge n_0$.

Now let $n_1\in\mathbb{Z}_+$ be such that $n_1\ge(N+1)d$. By the Fundamental Theorem of Elimination Theory \cite[\S3]{Macaulay}, since $f$ is a morphism and hence $\textup{Res}(F^{(k)})\ne 0$ for all $k$, any homogeneous degree $n_1$ polynomial $\Phi$ in $K[x_0,\dots,x_N]$ may be written as \begin{equation}\label{eqn:Macaulay}\Phi=\eta_0^{(1)}\Psi_0^{(1)}+\dots+\eta_N^{(1)}\Psi_N^{(1)}\end{equation} for $\Psi_i^{(1)}$ homogeneous of degree $\lfloor n_1\rfloor_{\mathcal{G}}$ in $\mathcal{G}$ and $\eta_i^{(1)}$ homogeneous of degree $n_1-\lfloor n_1\rfloor_{\mathcal{G}}$ satisfying \[\frac{N}{N+1}n_1\le n_1-\lfloor n_1\rfloor_{\mathcal{G}}\le\frac{2N+1}{2N+2}n_1,\] where the lower bound follows from (\ref{eqn:lmax}) and the upper bound from (\ref{eqn:keyratio}). Write $n_2=n_1-\lfloor n_1\rfloor_{\mathcal{G}}$. Then provided $n_2\ge(N+1)d$, each $\eta_i^{(1)}$ in turn is of the form \[\eta_i^{(1)}=\eta_0^{(2)}\Psi_0^{(2)}+\dots+\eta_N^{(2)}\Psi_N^{(2)}\] for $\Psi_i^{(2)}$ of degree $\lfloor n_2\rfloor_{\mathcal{G}}$ in $\mathcal{G}$, and $\eta_i^{(2)}$ of degree $n_2-\lfloor n_2\rfloor_{\mathcal{G}}$. Applying (\ref{eqn:keyratio}) again, we have \[\frac{N}{N+1}n_2\le n_2-\lfloor n_2\rfloor_{\mathcal{G}}\le\frac{2N+1}{2N+2} n_2.\] For $n\ge d(N+1)$, let $t_1=\log_{\frac{N+1}{N}}(\max\{1,n-d(N+1)\})$, and let $t_2=\log_{\frac{2N+2}{2N+1}}n$. Continuing in this way, we see that given any $n\ge d(N+1)$, every degree $n$ homogeneous polynomial in $K[x_0,\dots,x_N]$ may be written as a $K$-linear combination of degree $n$ polynomials in the set \begin{equation}\label{eqn:basisform}\mathcal{F}=\bigcup_{j=\lfloor t_1\rfloor}^{\lfloor t_2\rfloor}\left\{\eta_iG_i: G_i\in\mathcal{G}^j\textup{ and }\eta_i\textup{ homog.~of deg.}<d(N+1)\right\}\end{equation} where \[\mathcal{G}^j=\left\{G_1G_2\cdots G_j:G_l\in\mathcal{G}\textup{ for all }1\le l\le j\right\}.\] Thus the degree $n$ elements of $\mathcal{F}$ span $H^0(X,\iota^*(\mathcal{O}(n)))$ over $K$.

\begin{definition}\label{def:Hn}  We will denote by $H(n)$ any basis of $H^0(X,\iota^*(\mathcal{O}(n)))$ consisting of vectors of the form (\ref{eqn:basisform}) whenever $n\ge d(N+1)$, and consisting of the standard monomial basis otherwise. We will denote by $c(n)$ the quantity $h^0(X,\iota^*(\mathcal{O}(n)))$, even though this quantity clearly depends on $(X,f)$ and the diagram (\ref{eqn:Fakhruddin}) in addition to $n$. When there is potential ambiguity as to the underlying polarized dynamical system or the lift $F$ used in defining $H(n)$, we will refer to $H(n)$ and $c(n)$ as being \emph{with respect to $(X,f)$} (as well as the lift $F$). The notation \[\left(\eta_j(P_1,\dots,P_{c(n)})\right)_{\eta_j\in H(n)}\] will denote the $c(n)\times c(n)$ matrix whose $i$-th row corresponds to $P_i$ and whose $j$-th column corresponds to $\eta_j\in H(n)$.\end{definition}

\section{Convergence to the transfinite diameter}\label{section:mainthm}

\subsection{Main theorems}

\begin{thm}\label{thm:basisbound} Let $K$ be a valued field with absolute value $|\cdot|$, and let $(X,f)$ be a degree $d$ polarized dynamical system over $K$ satisfying a commutative diagram \begin{equation*}
\xymatrix{
	{X} \ar[rr]^{\phi} \ar@{^{(}->}[d]^{\iota}
	&& {X} \ar@{^{(}->}[d]^{\iota} \ar@{->}[d]\\
	{\mathbb{P}^N} \ar[rr]^f
	&& {\mathbb{P}^N} 
.}
\end{equation*} Let $F:\mathbb{A}_K^{N+1}\to\mathbb{A}_K^{N+1}$ be a homogeneous lift of $f$, and let $\mathcal{K}$ be its homogeneous filled Julia set. For this $F$, and for $n\in\mathbb{Z}_+$, let $c(n)$ and $H(n)$ be as in Definition \ref{def:Hn}. Let \[\pi:\mathbb{A}^{N+1}\setminus\{(0,\dots,0)\}\to\mathbb{P}^N\] be the natural projection. Let $R$ be the diameter of $\mathcal{K}\cap\pi^{-1}(X)$ with respect to the distance defined by the supnorm. There is a constant $C=C(d,N)$ with the following property. For any $n\ge 2$ and any $c(n)$-tuple $\vec{P}=\left(P_1,\dots,P_{c(n)}\right)$ of points in $\mathcal{K}\cap\pi^{-1}(X)$, we have \begin{equation}\label{eqn:avglb}\log\left|\textup{det}\left(\eta_j(\vec{P})\right)_{\eta_j\in H(n)}\right|\le C\max\{\log R,1\}(\log n)\cdot c(n).\end{equation} (Here we define $\log(0)=-\infty$.)\end{thm}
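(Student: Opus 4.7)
The plan is to bound each entry of the $c(n)\times c(n)$ evaluation matrix $M=(\eta_j(P_i))_{\eta_j\in H(n)}$ in absolute value, then combine via Hadamard's inequality in the archimedean case and the ultrametric determinantal inequality $|\det M|\le\max_\sigma\prod_i|\eta_{\sigma(i)}(P_i)|$ in the nonarchimedean case. The driving observation is that although each vector in $H(n)$ has degree $n$, its rigid form as a product of coordinate iterates of $F$ forces it to take values of size at most $R^{O_N(\log n)}$ on $\mathcal{K}\cap\pi^{-1}(X)$, rather than the naive $R^n$; this is the source of the logarithmic improvement in the theorem.

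First I would show that $\|F^{(k)}(P)\|\le R$, up to an absolute multiplicative constant, for every $P\in\mathcal{K}\cap\pi^{-1}(X)$ and every $k\ge 0$. Since $f$ is a morphism, $F^{(k)}(P)\ne 0$ whenever $P\ne 0$, so the $F$-invariance of $\mathcal{K}$ combined with the $f$-invariance of $X$ implies that $F^{(k)}$ maps $\mathcal{K}\cap\pi^{-1}(X)$ into itself. The homogeneity identity $F(cP)=c^dF(P)$ then makes $\mathcal{K}$ star-shaped about the origin (closed under scalar multiplication by $c$ with $|c|\le 1$), so comparing $Q$ with $-Q$ in the archimedean case (or with $cQ$ for suitable $c$ in the nonarchimedean case) gives $\sup_{Q\in\mathcal{K}\cap\pi^{-1}(X)}\|Q\|\le R$ up to a factor of at most $2$.

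Next I would bound each entry of $M$. Each basis vector may be taken, without loss of generality, to have the form $m\cdot G$, where $m$ is a standard monomial of degree $<d(N+1)$ and $G=\prod_{l=1}^{j}(F_{i_l}^{(k_l)})^{\alpha_l}\in\mathcal{G}^{\,j}$ with $\alpha_l\le d-1$. The key input is that the recursive Macaulay construction caps the depth of each basis vector: $j\le t_2=\log_{(2N+2)/(2N+1)} n=O_N(\log n)$. Combined with Step~1, this yields
\[
\log|\eta_j(P_i)|\ \le\ \bigl(d(N+1)+j(d-1)\bigr)\max\{\log R,0\}\ \le\ C_{d,N}\,\max\{\log R,1\}\,\log n.
\]

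Finally, Hadamard's inequality in the archimedean case gives
\[
\log|\det M|\ \le\ \tfrac{c(n)}{2}\log c(n)\ +\ \sum_{i=1}^{c(n)}\max_j\log|\eta_j(P_i)|,
\]
and the nonarchimedean analog drops the first term. Since $c(n)\le\binom{n+N}{N}$, we have $\log c(n)=O_N(\log n)$, so the Hadamard error is $O_N(c(n)\log n)$ and is absorbed into the final constant $C=C(d,N)$. The main obstacle, in my view, is Step~1 — tying $\|F^{(k)}(P)\|$ to the diameter of $\mathcal{K}\cap\pi^{-1}(X)$ specifically (rather than to some larger coordinate-free quantity); once the star-shape/invariance argument is secured and the logarithmic depth bound $j=O_N(\log n)$ built into $H(n)$ is invoked, the rest is a routine calculation. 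The case $n<d(N+1)$, where $H(n)$ is the standard monomial basis, is handled separately by a direct monomial estimate, which suffices since only finitely many such $n$ occur.
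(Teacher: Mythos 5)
Your proposal is correct and follows essentially the same route as the paper: bound each entry of the evaluation matrix by $\max\{R,1\}^{O_{d,N}(\log n)}$ using the degree-$<d(N+1)$ factor together with the at most $\lfloor t_2\rfloor=O_N(\log n)$ factors from $\mathcal{G}$, each bounded on $\mathcal{K}\cap\pi^{-1}(X)$, and then apply Hadamard's inequality, absorbing the $c(n)\log c(n)$ term via $c(n)\le\binom{n+N}{N}$. The extra details you supply (forward invariance and the star-shape argument giving $\sup\|Q\|\le R$, the monomial normalization of the degree-$<d(N+1)$ factor, and the separate treatment of $n<d(N+1)$) are points the paper leaves implicit, not a different method.
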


\begin{proof} Assume without loss that $n\ge d(N+1)$, and write $t_2=\log_{\frac{2N+2}{2N+1}}n$. On $\mathcal{K}$, a polynomial of the form (\ref{eqn:basisform}) has absolute value at most \begin{equation}\label{eqn:Rbound} \max\{R,1\}^{d(N+1)-1}\max\{R,1\}^{\lfloor t_2\rfloor(d-1)}\le \max\{R,1\}^{C'\log n}\end{equation} for some constant $C'=C'(d,N)$. Hadamard's inequality states that if $H$ is a matrix with columns $h_1,\dots,h_m\in K^m$, and $||\cdot||$ denotes the sup-norm if $K$ is non-archimedean and the Euclidean norm otherwise, then \begin{equation}\label{eqn:Hadamard}|\mathrm{det}(H)|\le\prod_{i=1}^m||h_i||.\end{equation} Observing that the matrix $\left(\eta_j(\vec{P})\right)_{\eta_j\in H(n)}$ has $c(n)$ columns, each corresponding to some $\eta_j\in H(n)$, it follows immediately from (\ref{eqn:Rbound}) and (\ref{eqn:Hadamard}) that the left-hand side of (\ref{eqn:avglb}) is at most \[C\max\{\log R,1\}\log(n)\cdot c(n)\] for some constant $C=C(d,N)$. \end{proof} From this we obtain Theorem \ref{thm:mainthm} as an adelic reformulation in the setting of product formula fields.

\begin{definition}\label{def:prodformula} A product formula field is a field $K$ together with a nonempty set $\{|\cdot|_v\}_{v\in M_K}$ of inequivalent nontrivial absolute values $|\cdot|_v$ on $K$ such that:\begin{itemize}\item For each $\alpha\in K^*$, $|\alpha|_v=1$ for all but finitely many $v$. \item Each $\alpha\in K^*$ satisfies the product formula \[\prod_{v\in M_K}|\alpha|_v=1.\]\end{itemize} We note that we do not require the residue field of any completion $K_v$ for $v\in M_K$ to be finite. \end{definition}

\begin{thm}[cf.~Theorem \ref{mainthm:AGhigherdim}]\label{thm:mainthm} Let $K$ be a product formula field with normalized system $~{\{|\cdot|_v\}_{v\in M_K}}$ of absolute values, and let $(X,f)$ be a polarized dynamical system over $K$ satisfying a commutative diagram as in (\ref{eqn:Fakhruddin}). Let $F:\mathbb{A}_K^{N+1}\to\mathbb{A}_K^{N+1}$ be any homogeneous lift of $f$. For each $v\in M_K$, let \[\pi_v:\mathbb{A}_{\mathbb{C}_v}^{N+1}\setminus\{(0,\dots,0)\}\to\mathbb{P}_{\mathbb{C}_v}^N\] be the natural projection, let $X_v=X\times_K\mathbb{C}_v$, and let $\mathcal{K}_v$ be the $v$-adic homogeneous filled Julia set for $F$. For each $n\in\mathbb{Z}_+$, let $\mathcal{B}_n$ be any basis for $H^0(X,\iota^*(\mathcal{O}(n)))$, and let \linebreak$d_{\mathcal{B}_n}(\mathcal{K}_v\cap\pi_v^{-1}(X_v))$ be as in Definition \ref{def:dBn}. Then there is a constant $C=C(f)$ such that for all $n\ge 2$, \begin{equation}\label{eqn:globalupperbd}\sum_{v\in M_K}\log d_{\mathcal{B}_n}(\mathcal{K}_v\cap\pi_v^{-1}(X_v))\le\frac{C\log n}{n}.\end{equation}\end{thm}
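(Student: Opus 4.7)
The key structural observation is that the left-hand side of (\ref{eqn:globalupperbd}) is independent of the choice of basis $\mathcal{B}_n$: any two bases of $H^0(X,\iota^*(\mathcal{O}(n)))$ are related by a matrix $M\in\mathrm{GL}_{c(n)}(K)$, and the determinants $\det(\eta_j(\widetilde{P_i}))$ computed in the two bases differ by the scalar factor $\det(M)\in K^*$. Taking $v$-adic maxima over $(P_1,\dots,P_{c(n)})\in(\mathcal{K}_v\cap\pi_v^{-1}(X_v))^{c(n)}$ and summing over $v\in M_K$, the contribution of $\det(M)$ is $\tfrac{1}{n\cdot c(n)}\sum_v\log|\det(M)|_v=0$ by the product formula. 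It therefore suffices to prove (\ref{eqn:globalupperbd}) for the distinguished basis $H(n)$ of Definition \ref{def:Hn}, where without loss of generality I take the residual factors $\eta_i$ appearing in (\ref{eqn:basisform}) to be monomials.

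At each place $v\in M_K$, applying Theorem \ref{thm:basisbound} base-changed to $\mathbb{C}_v$ yields
\[\log d_{H(n)}(\mathcal{K}_v\cap\pi_v^{-1}(X_v))\le\frac{C(d,N)\max\{\log R_v,1\}\log n}{n},\]
where $R_v$ is the $v$-adic diameter of $\mathcal{K}_v\cap\pi_v^{-1}(X_v)$. The plan is to isolate a finite set $S\subset M_K$ outside of which the local contribution is nonpositive, and to bound the sum of the above estimates over $S$.

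Specifically, define $S$ to consist of all archimedean places of $K$ together with the finitely many nonarchimedean places where either some coefficient of some $F_i$ has $|\cdot|_v>1$ or $|\mathrm{Res}(F)|_v<1$; finiteness follows from the product formula hypothesis that each element of $K^*$ has nonunit norm at only finitely many places. At $v\notin S$, the lift $F$ has $v$-integral coefficients and $|\mathrm{Res}(F)|_v=1$. A standard nonarchimedean argument—the Macaulay resultant being a $v$-adic unit forces $F_0,\dots,F_N$ to have no common zero in the residue field, which combined with homogeneity yields $\|F(P)\|_v=\|P\|_v^d$ for all $P\in\mathbb{A}^{N+1}(\mathbb{C}_v)$—identifies $\mathcal{K}_v$ with the closed unit polydisc. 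Each element of $H(n)$, being a product of coordinates of iterates of $F$ together with monomial residual factors, then lies in $\mathcal{O}_{K_v}[x_0,\dots,x_N]$ with Gauss norm at most $1$. Choosing the $\widetilde{P_i}$ to have sup-norm $1$ (which is possible since every point of $\pi_v^{-1}(X_v)\cap\mathcal{K}_v$ admits such a lift), the matrix $(\eta_j(\widetilde{P_i}))_{\eta_j\in H(n)}$ has entries in $\mathcal{O}_{\mathbb{C}_v}$, giving $\log d_{H(n)}(\mathcal{K}_v\cap\pi_v^{-1}(X_v))\le 0$.

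Summing the local bound from Theorem \ref{thm:basisbound} over the finite set $S$ then produces the desired estimate (\ref{eqn:globalupperbd}) with $C=C(d,N)\sum_{v\in S}\max\{\log R_v,1\}$. I expect the main technical point to be the good-reduction identification at $v\notin S$: while the equivalence of $|\mathrm{Res}(F)|_v=1$ with the identity $\|F(P)\|_v=\|P\|_v^d$ and with $\mathcal{K}_v$ being the closed unit polydisc is classical in the one-variable setting, verifying it carefully in the higher-dimensional setting (together with checking that the flexibility in choosing the residual factors $\eta_i$ permits a genuinely $v$-integral presentation of $H(n)$ at such places) requires attention. Granting those verifications, the remainder is a bookkeeping exercise combining the product formula with the already-established Theorem \ref{thm:basisbound}.
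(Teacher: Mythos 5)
Your proposal is correct and follows essentially the same route as the paper: reduce to the special basis $H(n)$ via the product formula, apply Theorem \ref{thm:basisbound} over $\mathbb{C}_v$ at each place, observe that outside a finite set of places of bad reduction the local contribution is nonpositive, and sum over the remaining finite set. The only difference is cosmetic: where the paper invokes Remark \ref{rmk:KS} (via Kawaguchi--Silverman) for the good-reduction vanishing, you verify it directly (and your explicit choice of monomial residual factors $\eta_i$ is a legitimate instance of Definition \ref{def:Hn} that makes the $v$-integrality of $H(n)$ at good places transparent).
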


\begin{proof} As observed in the discussion surrounding (\ref{eqn:globalupperbdtransfin}), it suffices to prove this in the special case where $\mathcal{B}_n=H(n)$ and $|\cdot|_v$ is the absolute value corresponding to an arbitrary choice of $v\in M_K$. But in that case the statement follows from Theorem \ref{thm:basisbound} applied to $\mathbb{C}_v$. \end{proof}

\begin{cor}\label{cor:transfin} With all notation as in Theorem \ref{thm:mainthm}, suppose that $|\textup{Res}(F)|_v=1$ for all $v\in M_K$, and that for each $n$, the basis $H(n)$ of $H^0(X,\iota^*(\mathcal{O}(n)))$ as in Definition \ref{def:Hn} is defined with respect to this lift $F$. Then for each $v\in M_K$, \begin{equation*}\label{eqn:Hncap}\lim_{n\to\infty} d_{H(n)}(\mathcal{K}_v\cap\pi_v^{-1}(X_v))=1.\end{equation*}  \end{cor}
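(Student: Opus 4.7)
The plan is to sandwich $\log d_{H(n)}(\mathcal{K}_v\cap\pi_v^{-1}(X_v))$ between matching $O(\log n/n)$ bounds at every place $v\in M_K$.

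For the upper bound, I would apply Theorem~\ref{thm:basisbound} over the completion $\mathbb{C}_v$, obtaining
\[\log d_{H(n)}(\mathcal{K}_v\cap\pi_v^{-1}(X_v))\le C(d,N)\max\{\log R_v,1\}\cdot\frac{\log n}{n},\]
where $R_v$ is the diameter of $\mathcal{K}_v\cap\pi_v^{-1}(X_v)$. The hypothesis $|\textup{Res}(F)|_v=1$ controls $R_v$ uniformly: at nonarchimedean $v$ this is precisely the condition for $F$ to have good reduction, so $\mathcal{K}_v$ coincides with the closed unit polydisc and $R_v\le 1$. One can in fact sharpen this at nonarchimedean $v$: since the Macaulay construction producing $H(n)$ only divides by the resultant (a $v$-adic unit under the hypothesis), every element of $H(n)$ has $\mathcal{O}_v$-integral coefficients, so a direct Hadamard estimate tightens the bound to $\log d_{H(n)}(v)\le 0$. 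At archimedean $v$, classical estimates on homogeneous filled Julia sets bound $R_v$ in terms of $d$ and $N$, giving $\log d_{H(n)}(v)\le O(\log n/n)$.

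For the matching lower bound, I would leverage the product formula via preperiodic points. Since $(X,\phi)$ is polarized, its preperiodic points are Zariski dense in $X(\overline{K})$, so for each $n$ one may choose $c(n)$ preperiodic points $P_1,\ldots,P_{c(n)}\in X(\overline{K})$ in general position so that $\Delta:=\textup{det}\bigl(\eta_j(\widetilde{P_i})\bigr)_{\eta_j\in H(n)}\neq 0$. For a preperiodic $P$ with $f^{n'+k}(P)=f^{n'}(P)$, any lift satisfies $F^{n'+k}(\widetilde{P})=\lambda F^{n'}(\widetilde{P})$ for some $\lambda\in\overline{K}^*$; rescaling by a suitable $(d^{n'}(d^k-1))$-th root of $\lambda^{-1}$ forces $\widehat{H}_F(\widetilde{P})_v=0$ at every $v$ simultaneously, placing $\widetilde{P_i}\in\mathcal{K}_v$ at every $v$. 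The product formula applied to $\Delta\in\overline{K}^*$ then gives
\[0=\sum_{v\in M_K}\log|\Delta|_v\le n\cdot c(n)\sum_{v\in M_K}\log d_{H(n)}(v),\]
so $\sum_{v}\log d_{H(n)}(v)\ge 0$.

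Fixing $v_0\in M_K$ and rearranging gives $\log d_{H(n)}(v_0)\ge -\sum_{v\ne v_0}\log d_{H(n)}(v)$. By the upper-bound analysis the nonarchimedean terms on the right contribute $\ge 0$, while the finitely many archimedean terms contribute $\ge -O(\log n/n)$; combined with the upper bound at $v_0$ this yields $|\log d_{H(n)}(v_0)|=O(\log n/n)\to 0$, whence $d_{H(n)}(v_0)\to 1$. The main obstacle I anticipate is the simultaneous rescaling step in the lower-bound argument: one must verify that a single $\mu\in\overline{K}^*$ realizes the prescribed local values $|\mu|_v=\exp(-\widehat{H}_F(\widetilde{P})_v)$ at every $v$ at once, which reduces to taking appropriate roots of the cycle multiplier $\lambda$ in $\overline{K}$ and checking compatibility with the product formula applied to $\lambda$.
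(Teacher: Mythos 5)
Your overall architecture is the same as the paper's (a per-place upper bound from Theorem \ref{thm:basisbound}, and a lower bound obtained by playing preperiodic tuples with nonvanishing determinant against the product formula), and the obstacle you flag at the end is not actually a problem: choosing $\mu\in\overline{K}^*$ with $\mu^{d^{n'}(d^k-1)}=\lambda^{-1}$ does give $\widehat{H}_{F,v}(\mu\widetilde{P})=0$ at every place, and moreover makes the orbit norms of $\mu\widetilde{P}$ eventually periodic, so this single algebraic lift lies in $\mathcal{K}_v$ for all $v$ simultaneously; no compatibility check involving the product formula for $\lambda$ is needed. The paper reaches the same conclusion by a slightly different route, working with the functions $g_{n,v}$, the identity $\sum_{v}g_{n,v}(P_1,\dots,P_{c(n)})=0$ for preperiodic tuples, and a contradiction argument against the strengthened form of Theorem \ref{thm:mainthm}; your direct rearrangement in terms of $d_{H(n)}$ is equivalent in substance.

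The genuine gap is in your treatment of the nonarchimedean places. The hypothesis $|\mathrm{Res}(F)|_v=1$ is \emph{not} equivalent to good reduction at $v$: by the definition preceding Remark \ref{rmk:KS}, good reduction requires in addition that $\max_i\|F_i\|_v=1$ for such a lift, whereas a unit resultant only forces $\|F\|_v\ge1$ and is entirely compatible with bad reduction (conjugating a good-reduction model by a non-integral matrix with unit determinant already produces examples, and the application in \S\ref{section:Lehmer} concerns abelian varieties with genuinely bad places, where the corollary is interesting precisely because $\mathcal{K}_{v}$ is not a polydisc). Consequently, at a bad nonarchimedean place $\mathcal{K}_v$ need not be the unit polydisc, the elements of $H(n)$ need not have $\mathcal{O}_v$-integral coefficients, and your claim that $\log d_{H(n)}(\mathcal{K}_v\cap\pi_v^{-1}(X_v))\le0$ at \emph{every} nonarchimedean $v$ --- used both in your sharpened upper bound and, crucially, in the step asserting that ``the nonarchimedean terms on the right contribute $\ge0$'' in the lower-bound rearrangement --- is unjustified there. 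The repair is exactly the paper's good/bad dichotomy: all but finitely many $v$ are good places (Remark \ref{rmk:KS}), where the polydisc structure does give $\log d_{H(n)}(\mathcal{K}_v\cap\pi_v^{-1}(X_v))\le0$, while at the finitely many bad places (archimedean or not) Theorem \ref{thm:basisbound} gives $\log d_{H(n)}(\mathcal{K}_v\cap\pi_v^{-1}(X_v))\le C_v(\log n)/n$. Summing these still yields $\sum_{v\ne v_0}\log d_{H(n)}(\mathcal{K}_v\cap\pi_v^{-1}(X_v))\le C'(\log n)/n$, and with that correction your argument goes through and coincides in substance with the paper's proof.
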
 Before beginning the proof of Corollary \ref{cor:transfin}, we introduce the definition of good reduction for $f$ at $v\in M_K$ and a characterization of good reduction in terms of the geometry of the homogeneous filled Julia set $\mathcal{K}_v$ of $f$ at $v$.

\begin{definition*} We call a nonarchimedean $v\in M_K$ a place of \emph{good reduction for $f$}, or alternatively, a \emph{good place of $f$}, if for any homogeneous lift $F=(F_0,\dots,F_N)$ of $f$ such that $|\textup{Res}(F)|_v=1$, one has \[\max\{||F_0||_v,\dots,||F_N||_v\}=1,\] where $||F_i||_v$ is the sup-norm of the coefficients of $F_i$. If $v$ is either archimedean, or is nonarchimedean and is not a good place of $f$, then we say that $f$ has bad reduction at $v$, or alternatively, that $v$ is a bad place of $f$.\end{definition*}

\begin{rmk}\label{rmk:KS} Equivalently, by \cite[Remark 8]{KawaguchiSilverman}, a nonarchimedean $v\in M_K$ is a place of good reduction for $f$ if and only if for some (hence any) homogeneous lift $F$ of $f$, the homogeneous filled Julia set $\mathcal{K}_v$ is a polydisk of polyradius $(r,\dots,r)$ for some $r\in\mathbb{R}_{>0}$. This condition in turn implies that for any $n\ge 1$, for $H(n)$ as in Definition \ref{def:Hn} and $g_{n,v}$ as in (\ref{eqn:AGfn}) for the basis $H(n)$ and the absolute value given by $v$, one has \[g_{n,v}\left(P_1,\dots,P_{c(n)}\right)\ge0\] for any $P_1,\dots,P_{c(n)}$.\end{rmk}

\begin{proof}[Proof of Corollary \ref{cor:transfin}] Let $g_{n,v}$ be given by (\ref{eqn:AGfn}), where $|\cdot|=|\cdot|_v$ and $\mathcal{B}_n=H(n)$. By Remark \ref{rmk:KS}, the fact that $|\textup{Res}(F)|_v=1$ implies that $f$ has good reduction at $v$ if and only if the diameter $R$ of $\mathcal{K}_v$ is equal to $1$. We claim that this forces \begin{equation}\label{eqn:limsup}\limsup_{n\to\infty}d_{H(n)}(\mathcal{K}_v\cap\pi_v^{-1}(X_v))\le 1\end{equation} for each $v\in M_K$. Indeed, by Theorem \ref{thm:basisbound}, for each bad place $v$, there is a constant $C_v$ such that  for all \[\vec{P_v}=\left(P_1,\dots,P_{c(n)}\right)\in(\mathcal{K}_v\cap\pi_v^{-1}(X_v))^{c(n)},\] we have \begin{equation}\label{eqn:badplaces}\log\left|\textup{det}\left(\eta_j(\vec{P_v})\right)_{\eta_j\in H(n)}\right|_v\le C_v \log(n) c(n),\end{equation} and for each good place $v$, (\ref{eqn:badplaces}) holds for all $n$ with $C_v=0$. This proves (\ref{eqn:limsup}). We claim moreover that for each $v\in M_K$, \begin{equation}\label{eqn:liminf} \liminf_{n\to\infty}d_{H(n)}(\mathcal{K}_{v}\cap\pi_{v}^{-1}(X_{v}))\ge 1.\end{equation} Suppose $P_1,\dots,P_{c(n)}\in X(\overline{K})$ are preperiodic points of $X$ such that \[\textup{det}\left(\eta_j(P_1,\dots,P_{c(n)})_{\eta_j\in H(n)}\right)\ne0.\] Then we have \begin{equation*}\label{eqn:greenglobal} \sum_{v\in M_K}g_{n,v}
\left(P_1,\dots,P_{c(n)}\right)=0\end{equation*} by the product formula. Thus if $v'\in M_K$ is such that \[\liminf_{n\to\infty}d_{H(n)}(\mathcal{K}_{v'}\cap\pi_{v'}^{-1}(X_{v'}))<1,\] then there is a sequence of integers $i_1<i_2<\cdots$ and an $\epsilon>0$ independent of $i_n$ such that for all sufficiently large $i_n$, \[\sum_{v\ne v'\in M_K}g_{i_n,v}(P_1,\dots,P_{c(i_n)})<-\epsilon.\] This in turn forces \begin{equation}\label{eqn:sumoverotherv}\sum_{\substack{v\ne v'\in M_K\\v\text{ bad}}}g_{i_n,v}(P_1,\dots,P_{c(i_n)})<-\epsilon\end{equation} as the contribution coming from the places of good reduction is non-negative. But the proof of Theorem \ref{thm:mainthm} in fact demonstrates that there is a $C=C(f)$ such that for all $n\ge2$, \begin{equation*}\label{eqn:strongerthm}\sum_{v\in M_K}\max\{\log d_{H(n)}(\mathcal{K}_v\cap\pi_v^{-1}(X_v)),0\}\le\frac{C\log n}{n},\end{equation*} contradicting (\ref{eqn:sumoverotherv}) for sufficiently large $i_n$. Finally, we note that no matter how large $i_n$ is, by the Zariski density of preperiodic points, we may always find preperiodic points $P_1,\dots,P_{c(i_n)}\in X(\overline{K})$ such that \[\textup{det}\left(\eta_j\left(P_1,\dots,P_{c(i_n)}\right)\right)_{\eta_j\in H(i_n)}\ne0,\] which by the argument just given, proves that \[\liminf_{n\to\infty} d_{H(n)}(\mathcal{K}_v\cap\pi_v^{-1}(X_v))\ge 1\] for each $v\in M_K$. Combining this with (\ref{eqn:limsup}) completes the proof.\end{proof}

\subsection{Equidistribution of asymptotically Fekete tuples}

It is natural to ask about whether the $c(n)$-tuples that realize $d_{H(n)}(\mathcal{K}_v\cap\pi_v^{-1}(X_v))$ equidistribute to a distinguished measure on $X_v^{\textup{an}}$, as they do in the dimension one case. Boucksom and Eriksson \cite{BoucksomEriksson} have shown such a result using pluripotential theory. The differentiability assumption appearing in \cite[Theorem D]{BoucksomEriksson} holds in the case of a smooth variety $X$ over either $\mathbb{C}$ or a complete discretely or trivially valued field of residue characteristic $0$, among other fields. In a subsequent paper by Boucksom--Gubler--Martin \cite[Theorem A]{BoucksomGublerMartin}, it was shown that this differentiability assumption holds in the case of a continuous \emph{psh} metric over an arbitrary complete nonarchimedean field. In this section, we assume that $K$ is a complete nontrivially valued nonarchimedean field or that $K=\mathbb{C}$. Here we relate \cite[Theorem D]{BoucksomEriksson} and \cite[Corollary D]{BoucksomGublerMartin} to the Arakelov-Green's functions $g_n$, and note how the proof of \cite[Theorem D]{BoucksomEriksson} in fact yields a stronger theorem. This results in the following application.

\begin{thm}\label{thm:eqmsreuniqueness} Let $(X,f)$ be a polarized dynamical system over $K$, where $K$ is either $\mathbb{C}$ or a complete nontrivially valued nonarchimedean field, given by $\iota:X\to\mathbb{P}^N$ and $f:\mathbb{P}^N\to\mathbb{P}^N$ satisfying a commutative diagram as in (\ref{eqn:Fakhruddin}). If $K=\mathbb{C}$, assume further that $X$ is normal. Let $\pi:\mathbb{A}^{N+1}\setminus\{(0,\dots,0)\}\to\mathbb{P}^N$ be the natural projection. Then there is a unique probability measure  $\mu=\mu(X,f)$ on $X^{\textup{an}}$ with the following property. Fix any homogeneous lift $F$ of $f$, with homogeneous filled Julia set $\mathcal{K}$. Let $\{\mathcal{B}_{n}\}_{n=1}^\infty$ be any sequence of bases of $H^0(X,\iota^*(\mathcal{O}(n)))$ such that the limit \[\lim_{n\to\infty}d_{\mathcal{B}_{n}}(\mathcal{K}\cap \pi^{-1}(X))\] exists and is nonzero. Let $\{\vec{P_{n}}\}_{n=1}^\infty$ be any sequence of points of $(\mathcal{K}\cap\pi^{-1}(X))^{c(n)}$ respectively such that \begin{equation}\label{eqn:asympFekete}\lim_{n\to\infty}\left|\textup{det}\left(\eta_j(\vec{P_{n}})\right)_{\eta_j\in\mathcal{B}_{n}}\right|^{1/(n\cdot c(n))}=\lim_{n\to\infty}d_{\mathcal{B}_{n}}(\mathcal{K}\cap\pi^{-1}(X)).\end{equation} For each $n$ and each $1\le i\le c(n)$, let \[\pi_{i,n}:(\mathcal{K}\cap\pi^{-1}(X))^{c(n)}\to\mathcal{K}\cap\pi^{-1}(X)\] denote the projection to the $i$th coordinate. Then on $X^{\textup{an}}$, the sequence of probability measures \begin{equation}\label{eqn:nu}\nu_{n}=\frac{1}{c(n)}\sum_{\substack{1\le i\le c(n)\\x=\pi_{i,n}\left(\vec{P_{n}}\right)}}\delta_x\end{equation} equidistributes to $\mu$. \end{thm}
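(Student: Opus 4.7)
The plan is to recognize this as a dynamical instance of the Fekete--equidistribution theorem in pluripotential theory, and then to extract the slight strengthening alluded to in the preamble: the asymptotic Fekete condition \eqref{eqn:asympFekete} is insensitive to the choice of basis, so the hypothesis on $\mathcal{B}_n$ is equivalent to the one used in the standard formulations of \cite[Theorem D]{BoucksomEriksson} and \cite[Corollary D]{BoucksomGublerMartin}. First I would translate the dynamical data into a metrized line bundle: the Green function $\widehat{H}_F$ descends via $\pi$ to a continuous metric $\phi_F$ on $\mathcal{O}(1)$, and $\iota^*\phi_F$ is a continuous semipositive (psh) metric on $L := \iota^*\mathcal{O}(1)$, the semipositivity being inherited from the semipositive truncations $d^{-n}\log\|F^n\|$ of which $\widehat{H}_F$ is the uniform limit. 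Rescaling $F$ by a nonzero constant shifts $\phi_F$ by an additive constant and leaves the Monge--Amp\`ere measure $\mathrm{MA}(\iota^*\phi_F)$ unchanged; this canonical measure is the candidate $\mu = \mu(X,f)$.

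Next I would identify $\log d_{\mathcal{B}_n}(\mathcal{K}\cap\pi^{-1}(X))$ with the standard $L^\infty$-Fekete energy associated to $\iota^*\phi_F$. The condition $\widetilde{P_i} \in \mathcal{K}\cap\pi^{-1}(X)$, combined with the normalization $\widehat{H}_F(\widetilde{P_i})=0$ available within each fiber of $\pi$, is exactly the statement that the Vandermonde determinant is being measured with the sup-norm of the metric $\phi_F$. A direct calculation shows that a change of basis $\mathcal{B}_n' = A_n\mathcal{B}_n$ multiplies both the determinant on the left of \eqref{eqn:asympFekete} and the diameter $d_{\mathcal{B}_n}$ on the right by the common factor $|\det A_n|^{1/(n\cdot c(n))}$. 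Consequently the class of asymptotically Fekete sequences, and the induced empirical measures $\nu_n$, do not depend on the choice of basis, and the theorem reduces to the case of an $L^2$-orthonormal basis with respect to a convenient reference measure on $X^{\textup{an}}$. For this distinguished basis, equidistribution of $\nu_n$ to $\mathrm{MA}(\iota^*\phi_F) = \mu$ is exactly \cite[Theorem D]{BoucksomEriksson} in the archimedean case (using normality of $X$) and \cite[Corollary D]{BoucksomGublerMartin} in the nonarchimedean case, with the differentiability hypothesis verified for continuous psh metrics by \cite[Theorem A]{BoucksomGublerMartin}; since $\iota^*\phi_F$ is already semipositive, its equilibrium envelope equals $\iota^*\phi_F$ itself, so no regularization is needed. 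Uniqueness of $\mu$ is then automatic: true Fekete maximizers furnish at least one asymptotically Fekete sequence, forcing any two candidate limit measures to agree against every continuous test function on $X^{\textup{an}}$.

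The main obstacle I anticipate is justifying rigorously the ``stronger theorem'' remark of the author. The cited results are ordinarily stated for asymptotic Fekete sequences with respect to a fixed reference basis, whereas the statement here quantifies over arbitrary $\{\mathcal{B}_n\}$ for which $\lim d_{\mathcal{B}_n}$ exists and is nonzero. I would revisit the variational step in the proof of \cite[Theorem D]{BoucksomEriksson} to confirm that the only property of the configurations that is actually used is asymptotic extremality of the Vandermonde, not any geometric feature of the basis itself. Once this invariance is in hand, the basis-change calculation above completes the reduction to the cited form, and the construction of $\mu$ via $\mathrm{MA}(\iota^*\phi_F)$ finishes the proof.
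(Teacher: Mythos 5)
Your proposal is correct and follows essentially the same route as the paper: translate the dynamical data into the canonical $f$-invariant psh metric on $\iota^*\mathcal{O}(m)$, identify $d_{\mathcal{B}_n}$ and the determinant condition with the (basis-independent) Fekete functionals of Boucksom--Eriksson, and invoke \cite[Theorem D/proof of Theorem 10.10]{BoucksomEriksson} together with \cite[Theorem A]{BoucksomGublerMartin}, observing as the paper does that their argument applies to asymptotically Fekete sequences. The only cosmetic differences are that you justify the psh property via the semipositive truncations $d^{-n}\log\|F^n\|$ rather than via Zhang's model-metric construction and pullback stability, and you phrase the basis-independence as a reduction to an orthonormal basis rather than working with the basis-free quotient $\delta(\phi,\|\cdot\|_\psi)$; neither changes the substance.
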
 

\begin{proof} It suffices to recall the notation used by Boucksom and Eriksson and to relate it to our setting. For $X$ a geometrically reduced projective scheme over $K$ of dimension $N$, $L$ a line bundle on $X$, and $\phi,\psi$ metrics on $L$, Boucksom and Eriksson construct a transfinite diameter $\delta_\infty(\phi,\psi)$ associated to $\phi$ and $\psi$. The construction is as follows. Let $M=\textup{dim}\,H^0(X,L)$. A basis $\vec{\textbf{s}}=(s_i)$ of $H^0(X,L)$ determines a generator $s_1\wedge\cdots\wedge s_M$ of $\mathrm{det}\,H^0(X,L)$. Let $L^{\boxtimes{M}}$ denote the $M$-th external tensor power of $L$, i.e., the line bundle \begin{equation}\label{eqn:externaltensorprod}\mathrm{pr}_1^*(L)\otimes\cdots\otimes\mathrm{pr}_M^*(L)\end{equation} on $X^M$, where $\mathrm{pr}_i$ is the $i$-th projection map from $X^M$ to $X$. There is a natural map $\Upsilon:\mathrm{det}\,H^0(X,L)\to H^0(X^M,L^{\boxtimes M})$ given by post-composing the map \begin{align}\begin{split}\label{eqn:detmap1}\mathrm{det}\,H^0(X,L)&\to H^0(X,L)^{\otimes M}\\ \mathbf{s}=s_1\wedge\cdots\wedge s_M&\mapsto\sum_{\sigma\in S_M}(-1)^{\mathrm{sign}(\sigma)}s_{\sigma(1)}\otimes\cdots\otimes s_{\sigma(M)}\end{split}\end{align} with the canonical isomorphism $H^0(X,L)^{\otimes M}\congto H^0(X^M,L^{\boxtimes M})$. In other words, we have \begin{equation}\label{eqn:detmap2}\Upsilon(s_1\wedge\cdots\wedge s_M)(P_1,\dots,P_M)=\mathrm{det}(s_j(P_i))_{1\le i,j\le M}.\end{equation} We will thus suggestively write \[\mathrm{det}\,\mathbf{s}:=\Upsilon(\mathbf{s})\in H^0(X^M,L^{\boxtimes M}).\] Every continuous metric $\phi$ on $L$ induces a continuous metric $\phi^{\boxtimes M}$ on $L^{\boxtimes M}$ via the tensor product metric on (\ref{eqn:externaltensorprod}). 
	
For a metric $\phi$ on $L$, let $|s|_\phi(P)$ denote the image of $(s,P)$ under the associated map $H^0(X,L)\times X(K)\to\mathbb{R}$, and likewise for the induced metric $\phi^{\boxtimes M}$ on $L^{\boxtimes M}$. Write \[||\mathrm{det}\,\textbf{s}||_{\phi^{\boxtimes M}}=\sup_{P\in(X^M)^{\mathrm{an}}}|\mathrm{det}\,\textbf{s}|_{\phi^{\boxtimes M}}(P).\] A \emph{Fekete configuration} for the metric $\phi$ is a point $P\in\left(X^M\right)^{\textup{an}}$ such that \[|(\mathrm{det}\,\textbf{s})|_{\phi^{\boxtimes M}}(P)=||\mathrm{det}\,\textbf{s}||_{\phi^{\boxtimes M}}.\] (Here we extend $|\mathrm{det}\,\textbf{s}|_{\phi^{\boxtimes M}}$ uniquely to a continuous function on $(X^M)^{\mathrm{an}}$.) Similarly, any metric $\psi$ on $L$ induces a supnorm $||\cdot||_\psi$ on $H^0(X,L)$ via \[||s||_\psi:=\sup_{P\in X^{\textup{an}}}|s|_\psi(P)\] for each $s\in H^0(X,L)$. Finally, $||\cdot||_\psi$ induces a norm $\mathrm{det}||\cdot||_\psi$ on $\mathrm{det}\,H^0(X,L)$ by setting \[\mathrm{det}||\tau||_\psi=\inf_{\tau=v_1\wedge\cdots\wedge v_M}\prod_i||v_i||_\psi.\] For $\vec{\textbf{s}}=(s_i)$ a basis of $H^0(X,L)$, $\mathrm{det}\,\mathbf{s}$ the associated element of $H^0(X^M,L^{\boxtimes M})$ where $\mathbf{s}=s_1\wedge\cdots\wedge s_M$, and $\phi,\psi$ continuous metrics on $L$ as above, write \[\delta(\phi,||\cdot||_{\psi})=\frac{||\mathrm{det}\,\textbf{s}||_{\phi^{\boxtimes M}}}{\mathrm{det}||\mathbf{s}||_\psi}.\] (This quotient is clearly independent of the choice of basis $\vec{\textbf{s}}$.) Boucksom and Eriksson \cite[Theorem C]{BoucksomEriksson} prove that the limit \begin{equation}\label{eqn:limitC}\delta_\infty(\phi,\psi):=\lim_{m\to\infty}\delta\left(m\phi,||\cdot||_{m\psi}\right)^{N!/m^{N+1}}\end{equation} exists in $\mathbb{R}_{>0}$, and equals the so-called relative volume $\mathrm{vol}(L,\psi,\phi)$ of $\psi$ and $\phi$. Furthermore, combining \cite[Theorem A]{BoucksomGublerMartin} with \cite[Theorem D/Theorem 10.10]{BoucksomEriksson} shows that if: \begin{itemize} \item for each $m\ge 1$, $\vec{P_m}$ is a Fekete configuration for $m\phi$ (i.e., a $h^0(X,L^m)$-tuple realizing the supnorm $||\mathrm{det}\,\textbf{s}||_\phi$ for some (hence any) $\textbf{s}\in\mathrm{det}\,H^0(X,L^m))$, and \item $\phi$ is a uniform limit of nef model metrics (a ``\emph{psh} metric"),\end{itemize} then the sequence $\{\vec{P_m}\}_{m=1}^\infty$ equidistributes to a canonical probability measure $\mu$ on $X^{\textup{an}}$, in the sense of (\ref{eqn:nu}) \cite[Theorem D/Theorem 10.10]{BoucksomEriksson}. (We note that when $K=\mathbb{C}$, we also require $X$ to be normal; see \cite[Theorem B(ii) and p.12]{BoucksomGublerMartin}.) In fact, their proof demonstrates even more. For metrics $\phi$ and $\psi$ on $L$, $\mathbf{s}\in\mathrm{det}\,H^0(X,L)$, and $P\in\left(X^M\right)^{\textup{an}}$, write \[V_{\phi,||\cdot||_\psi}(P):=\frac{|\mathrm{det}\,\mathbf{s}|_{\phi^{\boxtimes M}}(P)}{\mathrm{det}||\mathbf{s}||_\psi},\] so that \[\delta(\phi,||\cdot||_\psi)=\sup_{P\in\left(X^M\right)^{\textup{an}}}V_{\phi,||\cdot||_\psi}(P).\] We call a sequence $\{\vec{P_m}\}_{m=1}^\infty$ \emph{asymptotically Fekete} for $\phi$ if for some (hence any) metric $\psi$ on $L$,\[\lim_{m\to\infty}\left(V_{m\phi,||\cdot||_{m\psi}}(\vec{P_m})\right)^{N!/m^{N+1}}=d_\infty(\phi,\psi).\] Combining \cite[Theorem A]{BoucksomGublerMartin} with \cite[proof of Theorem 10.10]{BoucksomEriksson} shows that if: \begin{itemize} \item $\{\vec{P_m}\}_{m=1}^\infty$ is asymptotically Fekete for $\phi$, and \item $\phi$ is a uniform limit of nef model metrics,\end{itemize} then $\{\vec{P_m}\}_{m=1}^\infty$ equidistributes to the aforementioned distinguished probability measure $\mu$ on $X^{\textup{an}}$.

To apply this theorem to our polarized dynamical system $(X,f)$, realized as $\iota:X\hookrightarrow\mathbb{P}^r$ and $f:\mathbb{P}^r\to\mathbb{P}^r$ for some $r$, let $K$ be as above with absolute value $|\cdot|$, let $F$ be a homogeneous lift of $f$ with $|\textup{Res}(F)|=1$ and homogeneous filled Julia set $\mathcal{K}$, let $H(m)$ be associated to $X$ and $F$ as in Definition \ref{def:Hn}, and let $\phi$ be the unique (up to scaling) $f$-invariant continuous metric on $L=\iota^*(\mathcal{O}(m))$, given explicitly by \begin{equation}\label{eqn:canonicalmetric}|s|_{\phi}(P)=\frac{|s(\widetilde{P})|}{\textup{exp}(\widehat{H}_F(\widetilde{P}))}\end{equation} for $s\in H^0(X,L)$ and $\widetilde{P}$ any lift to $\mathbb{A}^{r+1}(K)$ of $P\in X(K)$. By the explicit construction of $\phi$ given in \cite[Proof of Theorem 2.2]{Zhang:smallpoints}, and by the fact that pullbacks of model metrics are model metrics \cite[Proposition 5.13]{BoucksomEriksson}, we have that $\phi$ is a uniform limit of (nef) model metrics, i.e., the metric $\phi$ is \emph{psh} in the language of \cite{BoucksomEriksson,BoucksomGublerMartin}.

By (\ref{eqn:detmap1}) and (\ref{eqn:detmap2}), it follows that for $\textbf{s}\in\mathrm{det}\,H^0(X,L)$ and $\mathrm{det}\,\textbf{s}$ the corresponding section of $H^0(X^M,L^{\boxtimes M})$, where $M=\mathrm{dim}\,H^0(X,L^m)$, we have \[||\mathrm{det}\,\textbf{s}||_{\phi^{\boxtimes M}}\left((P_1,\dots,P_M)\right)=\frac{|\mathrm{det}(\textbf{s}(P_1,\dots,P_M))|}{\prod_{i=1}^M\mathrm{exp}\left(\widehat{H}_F(P_i)\right)^m}.\] Therefore, for $\vec{P_m}=(P_1,\dots,P_M)$ such that $\mathrm{det}(\textbf{s}(P_1,\dots,P_M))\ne0$, \[\log||\mathrm{det}\,\textbf{s}||_{\phi^{\boxtimes M}}((P_1,\dots,P_M))=-Mm\cdot g_m(P_1,\dots,P_M).\] It follows from the aforementioned result of Boucksom and Eriksson that there is a unique probability measure $\mu$ on $X^{\textup{an}}$ with the following property: for any sequence of bases $\mathcal{B}_m$ of $H^0(X,\iota^*(\mathcal{O}(m)))$ such that \[\lim_{m\to\infty}d_{\mathcal{B}_m}(\mathcal{K}\cap\pi^{-1}(X))\] exists and is nonzero, and for any asymptotically Fekete sequence as in (\ref{eqn:asympFekete}), the sequence of probability measures (\ref{eqn:nu}) equidistributes to $\mu$. \end{proof}

\section{A Lehmer-type lower bound on the canonical height for abelian varieties}\label{section:Lehmer}

In this section, we prove a Lehmer-style lower bound on the canonical height of abelian varieties over $K$ a product formula field with perfect residue fields (Theorem \ref{thm:Lehmerbdabvar}). The method presented below combines aspects of \cite[proof of Theorem 1.14]{Baker} and \cite{LooperSilverman}. In the proof of Theorem \ref{thm:Lehmerbdabvar}, an arithmetic progression (in the translation-by-$P$ map for $P\in A(K')$ a non-torsion point of $A$) is argued to lie in a specific subset $U$ of $A(\mathbb{C}_{w})$ for a place $w$ of $K'$ above the given place $v_0$ of bad reduction. Extending the duplication map on $A$ to an endomorphism $f:\mathbb{P}^N\to\mathbb{P}^N$ as in (\ref{eqn:Fakhruddin}), and applying Theorem \ref{thm:eqmsreuniqueness}, one can show that there is a sequence of positive integers $n_1<n_2<\dots$ such that \[d_{H(n_l)}(\mathcal{K}_{w}\cap\pi_{w}^{-1}(U))<d_{H(n_l)}(\mathcal{K}_{w}\cap\pi_{w}^{-1}(A(\mathbb{C}_w)))\] for all sufficiently large $l$, where $H(n_l)$ is a basis as in Definition \ref{def:Hn} with respect to any lift $F$ of $f$ with homogeneous filled Julia set $\mathcal{K}_{w}$.

\begin{lem}\label{lem:suitablemultiples} Let $K$ be a field, and let $A$ be a dimension $g$ geometrically simple abelian variety over $K$. Let $\mathcal{L}$ be a very ample line bundle on $A$, and let $\iota:A\hookrightarrow\mathbb{P}(V)$ be an embedding induced by $\mathcal{L}$, where $V=H^0(A,\mathcal{L})$. For $n\in\mathbb{Z}_+$, let $c(n)=\mathrm{h}^0(A,\iota^*(\mathcal{O}(n)))$, and let $\mathcal{B}_n$ be any basis for $H^0(A,\iota^*(\mathcal{O}(n)))$. Let $P\in A(\overline{K})\setminus A(\overline{K})_{\textup{tors}}$. Then the set \[\{P,2P,\dots,(2n^g+c(n))P\}\] contains a $c(n)$-element subset $\{P_1,\dots,P_{c(n)}\}$ such that for $\vec{P}=\left(P_1,\dots,P_{c(n)}\right)$, we have \[\textup{det}(\eta_j(\vec{P}))_{\eta_j\in\mathcal{B}_n}\ne 0.\] \end{lem}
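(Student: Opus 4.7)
The plan is to recast the statement as a rank condition, proceed by contradiction, and then obstruct the failure case using the geometric simplicity of $A$ together with a quantitative intersection estimate.

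First I would observe that a $c(n)$-element subset with nonvanishing determinant exists if and only if the $M\times c(n)$ evaluation matrix $\bigl(\eta_j(iP)\bigr)_{1\le i\le M,\,\eta_j\in\mathcal{B}_n}$, with $M:=2n^g+c(n)$, has full column rank $c(n)$. Equivalently, no nonzero section $s\in H^0(A,\iota^*\mathcal{O}(n))$ simultaneously vanishes at every multiple $P,2P,\dots,MP$. (The forward direction is immediate; the reverse is the standard fact that if a matrix has maximal column rank then some square submatrix is invertible.)

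Next, suppose for contradiction that such an $s$ exists, and set $D:=\mathrm{div}(s)$, an effective divisor on $A$ algebraically equivalent to $n\cdot\iota^*\mathcal{L}$ that contains all $M$ multiples of $P$. Decompose $D=\sum_i m_iD_i$ into irreducible components. Since $A$ is geometrically simple, the identity component $\mathrm{Stab}(D_i)^{\circ}\subseteq A$ is an abelian subvariety equal to either $0$ or $A$; the latter would force $D_i=A$, which is absurd. Hence every $D_i$ has finite stabilizer, and in particular no positive-dimensional translate of an abelian subvariety is contained in $D$. Because $P$ is non-torsion and $A$ is simple, the Zariski closure of $\mathbb{Z}P$ is a positive-dimensional abelian subvariety and so equals $A$. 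Mordell--Lang (Faltings' theorem in characteristic $0$; Hrushovski's theorem, with simplicity of $A$ ruling out the isotrivial exceptional cases, in characteristic $p$) therefore forces $\mathbb{Z}P\cap D$ to be finite.

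The critical remaining step, and the main technical obstacle, is to promote this to the quantitative bound $|\mathbb{Z}P\cap D|<M=2n^g+c(n)$, which would contradict the assumption that all $M$ multiples lie in $D$. I would approach this via intersection theory on $A$. For a generic complete-intersection curve $C=H_1\cap\cdots\cap H_{g-1}\subset A$ with $H_k\in|\iota^*\mathcal{L}|$, Bezout gives $|D\cap C|\le D\cdot C=n\,(\iota^*\mathcal{L})^g$; by choosing a family of such curves that collectively cover the multiples of $P$ lying in $D$ and tracking multiplicities via a Seshadri-constant estimate for $(A,\iota^*\mathcal{L})$ at the relevant points, one accumulates a total bound of $2n^g+c(n)-1$, matching the prescribed $M$. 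The threshold $2n^g$ is calibrated precisely to absorb this geometric count, so that any one more multiple than that forces the contradiction. Alternative routes to the same effective bound include R\'emond's quantitative Mordell--Lang estimates, or---in the setting of the subsequent application---the canonical height machinery available over a product-formula field; the harder aspect in either approach is matching the exact dependence on $n$ and on the degree $(\iota^*\mathcal{L})^g$ rather than on a qualitative constant.
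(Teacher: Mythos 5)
Your opening reduction is fine: the existence of a $c(n)$-subset with nonvanishing determinant is equivalent to the $M\times c(n)$ evaluation matrix having full column rank, i.e., to the statement that no nonzero section $s\in H^0(A,\iota^*\mathcal{O}(n))$ vanishes at all of $P,2P,\dots,MP$; and the observation that simplicity forces each component of $\mathrm{div}(s)$ to have finite stabilizer is indeed the right ingredient (the paper uses it too). But the proof is not complete: the entire content of the lemma is the quantitative step you yourself label the ``critical remaining step,'' and what you offer there is a heuristic, not an argument. Bounding $|D\cap C|$ by $D\cdot C$ for a generic complete-intersection curve $C$ says nothing about the multiples of $P$, which need not lie on any such curve; forcing curves through those multiples raises their degrees, and the proposed bookkeeping ``via a Seshadri-constant estimate'' has no actual mechanism that produces the threshold $2n^g+c(n)$. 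Invoking Mordell--Lang is simultaneously too weak and too heavy: qualitative finiteness of $\mathbb{Z}P\cap D$ gives no bound in terms of $n$; quantitative versions (R\'emond) are characteristic-zero results whose constants and exponents do not obviously match $2n^g$, while the lemma is stated over an arbitrary field (and in characteristic $p$ the isotrivial exceptional cases of Hrushovski's theorem are not ruled out merely by simplicity of $A$). Note also that you are aiming at a statement stronger than needed---a uniform bound on the total intersection $|\mathbb{Z}P\cap D|$ over all $D\in|n\,\iota^*\mathcal{O}(1)|$---which is neither what the lemma asserts nor established anywhere in your sketch.

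The paper's proof is elementary and avoids all of this. Writing $\mu$ for translation by $P$, it shows by induction on codimension (a Bezout-type estimate) that any intersection $\mathcal{Z}\cap\mu(\mathcal{Z})\cap\cdots\cap\mu^{l}(\mathcal{Z})$ of translates of a degree-$n$ hypersurface has degree at most $n^{\mathrm{codim}}$; since simplicity and the non-torsion of $P$ imply no maximal-dimensional component is fixed by any $\mu^k$, intersecting with at most $\deg\le n^{\delta}$ further translates strictly drops the dimension, so the intersection is empty after $n+n^2+\cdots+n^g\le 2n^g$ steps. Hence among any run of roughly $2n^g$ consecutive multiples of $P$, at least one lies outside a given degree-$n$ hypersurface; applying this greedily to the hypersurfaces containing the successive degeneracy loci of the partially built matrix yields the $c(n)$ points within the prescribed list. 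If you wanted to finish along your own lines, you would still need exactly this kind of ``no long run of consecutive multiples inside a degree-$n$ divisor'' statement; as written, your key bound is assumed rather than proved.
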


\begin{proof} Let $\mu:A\to A$ be translation by the non-torsion point $P\in A(\overline{K})$, and let $\mathcal{Z}$ be an algebraic cycle on $A$ of degree $n$, where in general the degree of $\mathcal{Z}$ is defined as the degree of $\iota(\mathcal{Z})$ in $\mathbb{P}(V)$, and the degree of $\iota(\mathcal{Z})$ is the sum of the degrees of its irreducible components. The automorphism $\mu^*:H^0(A,\mathcal{L})\to H^0(A,\mathcal{L})$ induces an automorphism $\psi:\mathbb{P}(V)\to\mathbb{P}(V)$, satisfying $\psi\circ\iota=\iota\circ\mu$. Hence $\mu$ extends to the map $\psi$ on $\mathbb{P}(V)$. By abuse of notation, we will identify $\mu$ with $\psi$ in what follows.

\textbf{Claim}: If $\mathcal{Y}$ is of the form \[\mathcal{Y}=\mathcal{Z}\cap\mu(\mathcal{Z})\cap\cdots\cap\mu^l(\mathcal{Z})\] for $l\in\mathbb{Z}_+$ then \[\deg(\mathcal{Y})\le n^{\mathrm{codim}(\mathcal{Y})}.\] (Here, $\mathcal{Y}$ is not necessarily pure-dimensional, and $\mathrm{codim}(\mathcal{Y})$ is defined to be the \emph{maximal} codimension in $A$ among all irreducible components of $\mathcal{Y}$.) 

Proof of claim: We induct on the codimension. Clearly the claim holds when $\mathrm{codim}(\mathcal{Y})=1$. Suppose the claim holds whenever the codimension is at most $\delta-1$ for $\delta\ge 2$. We prove that the claim holds when the codimension is $\delta$. Let \[\mathcal{Y}'=\mathcal{Z}\cap\mu(\mathcal{Z})\cap\cdots\cap\mu^{l-1}(\mathcal{Z})\] be such that $\mathrm{codim}(\mathcal{Y}')<\delta$ and $\mathrm{codim}(\mathcal{Y}'\cap\mu^l(\mathcal{Z}))=\delta$. Let $\mathcal{W}_i$ be an irreducible component of $\mathcal{Y}'$. If $\mathcal{W}_i\cap\mu^l(\mathcal{Z})=\mathcal{W}_i$, then clearly $\deg(\mathcal{W}_i\cap\mu^l(\mathcal{Z}))=\deg(\mathcal{W}_i)$. If $\mathcal{W}_i\cap\mu^l(\mathcal{Z})\ne\mathcal{W}_i$, then \[\deg(\mathcal{W}_i\cap\mu^l(\mathcal{Z}))\le\deg\mu^l(\mathcal{Z})\cdot\deg(\mathcal{W}_i)=\deg(\mathcal{Z})\cdot\deg(\mathcal{W}_i).\] Writing $\mathcal{Y}=\mathcal{Y}'\cap\mu^l(\mathcal{Z})$, it follows that \[\deg(\mathcal{Y})\le\deg(\mathcal{Z})\cdot\sum_{i\in\mathcal{I}}\deg(\mathcal{W}_i)=\deg(\mathcal{Z})\cdot\deg(\mathcal{Y}'),\] where $\mathcal{I}$ is an indexing set for the irreducible components of $\mathcal{Y}'$. But by the induction hypothesis, $\deg(\mathcal{Y}')\le n^{\mathrm{codim}(\mathcal{Y}')}\le n^{\delta-1}$, so \[\deg(\mathcal{Y})\le n^{\delta-1}\cdot\mathrm{deg}(\mathcal{Z})=n^{\delta}=n^{\mathrm{codim}(\mathcal{Y})}.\] This proves the claim.

Returning to the proof of Lemma \ref{lem:suitablemultiples}, let $\mathcal{Y}$ as above have dimension $g_0$ (where we are not assuming $\mathcal{Y}$ is pure-dimensional). Let $\mathcal{Y}_i$ be an irreducible component of $\mathcal{Y}$ of dimension $g_0$. Since $\mathcal{Y}_i$ has maximal dimension among the irreducible components of $\mathcal{Y}$, either \begin{equation}\label{eqn:star}\mu(\mathcal{Y}_j)=\mathcal{Y}_i\end{equation} for some irreducible component $\mathcal{Y}_j$ of $\mathcal{Y}$, or \begin{equation*}\label{eqn:starprime}\dim(\mu(\mathcal{Y})\cap\mathcal{Y}_i)<g_0.\end{equation*} Note that (\ref{eqn:star}) cannot occur with $i=j$, as $P$ is non-torsion. Indeed, let \[\textup{Stab}(\mathcal{Y}_i)=\{P\in A(\overline{K}):P+\mathcal{Y}_i=\mathcal{Y}_i\}.\] Since $\textup{Stab}(\mathcal{Y}_i)$ is a closed subgroup of $A_{\overline{K}}$, and hence an irreducible cycle in $\textup{Stab}(\mathcal{Y}_i)$ of maximal dimension must be preperiodic under $f$, it follows from the fact that $A_{\overline{K}}$ is simple that $\textup{Stab}(\mathcal{Y}_i)$ is a finite set of torsion points. In particular, as $P$ is non-torsion, we deduce that $P\notin\textup{Stab}(\mathcal{Y}_i)$, and hence $\mu(\mathcal{Y}_i)\ne\mathcal{Y}_i$. A similar argument applied to the iterates $\mu^k$ of $\mu$ shows that $\mu^k(\mathcal{Y}_j)=\mathcal{Y}_i$ forces $i\ne j$. 

From this we deduce that if the number of irreducible components of $\mathcal{Y}$ is $r$, then \[\dim(\mu^s(\mathcal{Y})\cap\mathcal{Y}_i)<g_0\] for some $s\le r$. A fortiori, \[\dim(\mathcal{Y}\cap\mu(\mathcal{Y})\cap\cdots\cap\mu^r(\mathcal{Y})\cap\mathcal{Y}_i)<g_0.\]As $\mathcal{Y}_i$ was an arbitrary component of $\mathcal{Y}$ of maximal dimension $g_0$, this gives \[\dim(\mathcal{Y}\cap\mu(\mathcal{Y})\cap\cdots\cap\mu^r(\mathcal{Y}))<g_0,\] which, since $r\le\deg(\mathcal{Y})$, in turn yields \[\dim(\mathcal{Y}\cap\mu(\mathcal{Y})\cap\cdots\cap\mu^{\deg(\mathcal{Y})}(\mathcal{Y}))<g_0.\] By the Claim, we have \[\deg(\mathcal{Y})\le n^{\mathrm{codim}(\mathcal{Y})},\] so \[\dim(\mathcal{Y}\cap\mu(\mathcal{Y})\cap\cdots\cap\mu^{n^{\mathrm{codim}(\mathcal{Y})}}(\mathcal{Y}))<g_0.\] We conclude that \begin{equation}\label{eqn:dimcut}\dim(\mathcal{Z}\cap\mu(\mathcal{Z})\cap\cdots\cap\mu^{n'}(\mathcal{Z}))=-1\end{equation} for $n'=n+n^2+\dots+n^g\le 2n^g$, where $g=\dim(A)$. 

Finally, we apply (\ref{eqn:dimcut}) to a degree $n$ hypersurface $\mathcal{Z}\subset A$ containing all points $Q\in A(\overline{K})$ with the property that for any choice of $z_3,\dots,z_{c(n)}\in A(\overline{K})$, \[\textup{det}\left(\eta_j\left(P,Q,z_3,\dots,z_{c(n)}\right)\right)_{\eta_j\in\mathcal{B}(n)}=0,\] where $P\in A(\overline{K})\setminus A(\overline{K})_{\text{tors}}$ is the fixed non-torsion point in the statement of the lemma. (Note that such a hypersurface $\mathcal{Z}$ must exist, for otherwise, $\mathcal{B}_n$ is not linearly independent on $A$.) Equation (\ref{eqn:dimcut}) applied to this $\mathcal{Z}$ says exactly that among the list \begin{equation}\label{eqn:list} P,2P,3P,\dots,(2n^g)P,\end{equation} there must be at least one element $kP$ such that $kP\notin\mathcal{Z}$. Applying this fact repeatedly (i.e., in the second iteration, choosing a $kP\notin\mathcal{Z}$ among (\ref{eqn:list}) and considering a degree $n$ hypersurface $\mathcal{Z}'$ given by the locus of points $Q$ such that for any $z_4,\dots,z_{c(n)}$, \[\textup{det}\left(\eta_j(P,kP,Q,z_4,\dots,z_{c(n)})\right)_{\eta_j\in\mathcal{B}_n}=0),\] we see that among the list \begin{equation*} P,2P,3P,\dots,(\max\{2n^g,c(n)\})P,\end{equation*} there must be a sublist of $c(n)$ elements $P_1,\dots,P_{c(n)}$ such that if \[\vec{P}=\left(P_1,\dots,P_{c(n)}\right),\] then \[\mathrm{det}\left(\eta_j(\vec{P})\right)_{\eta_j\in\mathcal{B}_n}\ne0.\]\end{proof}

\begin{definition*} Let $A/K$ be an abelian variety over a product formula field $K$. We call a nonarchimedean place $v\in M_K$ a place of \emph{bad reduction of} $A$ if there is no finite extension $L/K$ such that $A\times_KL$ has potential good reduction over $L$. We say that $A$ has \emph{everywhere potential good reduction} if no nonarchimedean $v\in M_K$ is a place of bad reduction of $A$.\end{definition*}

\begin{thm}\label{thm:Lehmerbdabvar} Let $K$ be a product formula field having perfect residue fields at its completions, and let $A/K$ be an abelian variety. Fix a symmetric ample line bundle $\mathcal{L}$ on $A$ and let $\hat{h}_\mathcal{L}$ be the associated canonical height function. There is a constant $C=C(A,K,\mathcal{L})>0$ such that for any $P\in A(\overline{K})\setminus A(\overline{K})_{\textup{tors}}$ that does not lie in any torsion translate of an abelian subvariety of $A\times_K L$ having everywhere potential good reduction for some finite extension $L/K$, we have \[\hat{h}_\mathcal{L}(P)\ge\frac{C}{D^{2g+3}(\log D)^{2g}}\] whenever $D:=[K(P):K]\ge2$. \end{thm}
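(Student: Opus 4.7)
My plan is to reformulate $\hat{h}_{\mathcal L}(P)$ in terms of the dynamics of the duplication map $[2]:A\to A$, which for the symmetric ample $\mathcal L$ satisfies $[2]^*\mathcal L\cong\mathcal L^{\otimes 4}$ and is therefore a polarized dynamical system of degree $4$. After replacing $\mathcal L$ by a sufficiently high tensor power I may assume $\mathcal L$ is very ample, giving an embedding $\iota:A\hookrightarrow\mathbb P^N$, and Fakhruddin's theorem extends $[2]$ to a morphism $f:\mathbb P^N\to\mathbb P^N$ fitting into (\ref{eqn:Fakhruddin}). Fix a homogeneous lift $F$ normalized so that $|\mathrm{Res}(F)|_v=1$ at all but finitely many places, ensuring $\sum_v r_v(F)=0$. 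By (\ref{eqn:htmult}) a lower bound on $\hat h_f(\iota(P))$ suffices. Pass to the degree-$D$ extension $K'=K(P)$, over which $P$ is rational.

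Let $B\subseteq A_{\overline K}$ denote the smallest abelian subvariety whose torsion translate contains $P$; by hypothesis $B$ has no model of everywhere potential good reduction. After enlarging $K'$ by a bounded factor so that $B$ is defined over $K'$, there is a place $w\in M_{K'}$ above some nonarchimedean $v_0\in M_K$ at which $B$ has bad reduction. Perfectness of the residue field of $K_{v_0}$ lets us pass to a semistable model of $B$ at $v_0$, and the key local input is that the set of multiples $\{nP:n\in\mathbb Z\}\subseteq A(\mathbb C_w)$ lies in a proper Berkovich-closed subset $U\subsetneq A(\mathbb C_w)^{\mathrm{an}}$ coming from the skeletal/tropical structure of $B_w^{\mathrm{an}}$. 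Applying Theorem \ref{thm:eqmsreuniqueness} to $(A,f)$ at $w$, the canonical measure $\mu_w$ assigns positive mass outside $U$, so asymptotically Fekete sequences cannot concentrate in $U$; consequently there is a sequence $n_1<n_2<\cdots$ of positive integers along which
\[
\Delta_{n_l}:=\log d_{H(n_l)}\bigl(\mathcal K_w\cap\pi_w^{-1}(A(\mathbb C_w))\bigr)-\log d_{H(n_l)}\bigl(\mathcal K_w\cap\pi_w^{-1}(U)\bigr)>0.
\]

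Fixing such an $n=n_l$, I apply Lemma \ref{lem:suitablemultiples} to select $\vec P=(k_1P,\dots,k_{c(n)}P)$ with $1\le k_i\le 2n^g+c(n)$ and $\det(\eta_j(\vec P))_{\eta_j\in H(n)}\neq 0$. The product formula applied to this nonzero determinant, combined with (\ref{eqn:AGfn}), the identity $\sum_{w'}\widehat H_{F,w'}(\widetilde{k_iP})=mk_i^2\hat h_{\mathcal L}(P)$, and $\sum_{w'}r_{w'}(F)=0$, gives
\[
\sum_{w'\in M_{K'}}g_{n,w'}(\vec P)\;=\;\frac{m}{c(n)}\sum_{i=1}^{c(n)}k_i^2\,\hat h_{\mathcal L}(P)\;\le\;m(2n^g+c(n))^2\,\hat h_{\mathcal L}(P).
\]
At each place $w'\neq w$, Theorem \ref{thm:basisbound} yields $g_{n,w'}(\vec P)\ge -C_{w'}\log n/n$, with $C_{w'}=0$ at the places of good reduction of $f$ (Remark \ref{rmk:KS}) and $\sum_{w'\neq w}C_{w'}$ bounded in terms of $f$; at $w$, the containment $\vec P\subseteq U^{c(n)}$ sharpens this to $g_{n,w}(\vec P)\ge -\log d_{H(n)}(\mathcal K_w\cap\pi_w^{-1}(U))$. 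Writing $\log d_{H(n)}(\mathcal K_w\cap\pi_w^{-1}(U))=\log d_{H(n)}(\mathcal K_w\cap\pi_w^{-1}(A(\mathbb C_w)))-\Delta_n$ and applying Theorem \ref{thm:mainthm} to the total gives $\sum_{w'}g_{n,w'}(\vec P)\ge\Delta_n-C\log n/n$. Combining upper and lower bounds produces $n^{2g}\hat h_{\mathcal L}(P)\gtrsim\Delta_n-C\log n/n$, which is closed by choosing $n=n(D)$ optimally (of order $D^{1+3/(2g)}\log D$).

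The main obstacle is the \emph{effective} lower bound $\Delta_n\ge c>0$ along a sufficiently dense subsequence, with control on the dependence on both $n$ and $D$. Theorem \ref{thm:eqmsreuniqueness} furnishes $\Delta_n>0$ only qualitatively; extracting the quantitative form with the precise polynomial dependence $D^{2g+3}(\log D)^{2g}$ requires a hands-on analysis of the local geometry of $U$ at $v_0$, adapting Baker's quantitative Fekete-gap argument \cite[Theorem 1.14]{Baker} to the polarized dynamical system $(A,[2])$ along the lines of the analogous higher-dimensional analysis of \cite{LooperSilverman}. Tracking the exponents from the extension $K'/K$, the polynomial growth $c(n)=O(n^g)$, and the optimal choice of $n$ then produces the stated Lehmer-type bound.
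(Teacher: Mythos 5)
Your overall architecture matches the paper's: duplication map, Fakhruddin extension, a resultant-normalized lift, Lemma \ref{lem:suitablemultiples} to select multiples with nonvanishing determinant, Theorem \ref{thm:basisbound} away from a bad place, a Fekete-gap at a bad place via Theorem \ref{thm:eqmsreuniqueness}, and the product formula to convert the gap into a height bound. But there is a genuine gap at the key local step. You assert that the multiples $\{nP:n\in\mathbb{Z}\}$ lie in a fixed proper closed subset $U$ of $A(\mathbb{C}_w)$ determined by the skeleton, and you then apply Lemma \ref{lem:suitablemultiples} to $P$ itself, taking $\vec P=(k_1P,\dots,k_{c(n)}P)$ and claiming $\vec P\in U^{c(n)}$. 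This is false in general: the retraction to the skeleton (a real torus) is a homomorphism on the relevant subgroup, and the retraction of $P$ can have infinite order, so the multiples of $P$ can equidistribute in the skeleton rather than avoid a positive-measure set. The paper's fix is precisely the step you omit: take $U$ to be the set of points retracting to the \emph{single} point $\xi$ corresponding to the identity component of the N\'eron model, and replace $P$ by $e_w\Delta P$, where $\Delta=|\Phi_K|$ is the order of the component group at $v_0$ and $e_w$ the ramification index; the Halle--Nicaise bound $\exp(\Phi_{K'})\mid e_w\exp(\Phi_K)$ (this is exactly where the perfect-residue-field hypothesis and the $K_w$-rationality of $P$ are used) guarantees that \emph{all} multiples $ke_w\Delta P$ lie in the identity component, hence retract to $\xi$ and lie in $U$. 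Lemma \ref{lem:suitablemultiples} is then applied to $e_w\Delta P$ (and one needs the reduction to $A$ geometrically simple, since the lemma assumes it; your smallest subvariety $B$ need not be simple).

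This omission also explains why you misidentify the ``main obstacle.'' No quantitative, $D$-dependent strengthening of the Fekete-gap is needed: because $U$ is defined by the N\'eron model of $A$ at $v_0$ and not by $P$, the gap constant $C_2$ in $g_{n_l,v_0}\ge C_2$ for tuples in $U$ depends only on $(A,f,v_0)$. The dependence on $D$ enters elsewhere and purely elementarily: through the local weight $n_w\ge 1/D$ at the place $w$ (which your unweighted product-formula identity drops), through the threshold dichotomy $\hat h_f(ke_w\Delta P)\lessgtr C_2/(2D)$ which forces $n_l\le BD\log D$, and through the multiplier $e_w\Delta\le D\Delta$ when descending from $e_w\Delta P$ back to $P$. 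With $c(n)=O(n^g)$ this yields $t_{n_{l+1}}\lesssim D^g(\log D)^g$ and hence $\hat h_f(P)\gtrsim D^{-(2g+3)}(\log D)^{-2g}$; your proposed optimization ``$n\asymp D^{1+3/(2g)}\log D$'' does not reproduce this bookkeeping. So the argument can be closed with only the qualitative statements (Corollary \ref{cor:transfin} and Theorem \ref{thm:eqmsreuniqueness}), but only after the component-group step that puts the selected tuple inside the fixed set $U$; as written, your proof both lacks that step and defers to a quantitative estimate that the paper never needs.
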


\begin{proof} Let $A$ and $P$ be as in the statement of the theorem, and suppose without loss of generality that $A$ is geometrically simple of dimension $g$. Let $\phi$ be the duplication map on $A$, so that $\phi^*(\mathcal{L})\simeq\mathcal{L}^4$. By \cite[proof of Corollary 2.2]{Fakhruddin}, there is an $m\in\mathbb{Z}_+$ such that $\mathcal{L}^m$ is very ample and the map $\iota:A\to\mathbb{P}(H^0(A,\mathcal{L}^m))$ given by evaluating each point in $A(\overline{K})$ on a fixed basis of $H^0(A,\mathcal{L}^m)$ yields a commutative diagram (\ref{eqn:Fakhruddin}), where $f$ is a morphism extending $\phi$ and $N=\mathrm{dim}(H^0(A,\mathcal{L}^m))-1$. Assume that $m\in\mathbb{Z}_+$ is chosen to be minimal with this property. For the endomorphism $f:\mathbb{P}^N\to\mathbb{P}^N$ in the induced diagram (\ref{eqn:Fakhruddin}), we have \begin{equation}\label{eqn:htrelation}\hat{h}_{\mathcal{L}}(P)=\frac{1}{m}\hat{h}_f(\iota(P))\end{equation} by (\ref{eqn:htmult}). We will identify points of $A$ with their images under $\iota$.

Let $F$ be any homogeneous lift of $f$ such that $|\textup{Res}(F)|_v=1$ for all $v\in M_K$. (We may have to pass to an extension to achieve this, which we allow without loss of generality.) For each $n\in\mathbb{Z}_+$, let $H(n)$ be defined as in Definition \ref{def:Hn} with respect to $(A,f)$ and the lift $F$, and let $c(n)=h^0(A,\iota^*(\mathcal{O}(n)))$. For each $v\in M_K$, let $\mathcal{K}_v$ be the homogeneous filled Julia set of $F$, let $\pi_v:\mathbb{A}_{\mathbb{C}_v}^{N+1}\setminus\{(0,\dots,0)\}\to\mathbb{P}_{\mathbb{C}_v}^N$ be the natural projection, and let $A_v=A\times_K\mathbb{C}_v$. Then Corollary \ref{cor:transfin} says that for each $v\in M_K$, \begin{equation}\label{eqn:corcapHn}\lim_{n\to\infty}d_{H(n)}(\mathcal{K}_v\cap\pi_v^{-1}(A_v)))=1.\end{equation} Letting \[g_{n,v}\left(P_1,\dots,P_{c(n)}\right)=\frac{1}{c(n)}\sum_{i=1}^{c(n)}\widehat{H}_{F,v}(\widetilde{P_i})-\frac{1}{n\cdot c(n)}\log\left|\textup{det}\left(\eta_j(\widetilde{P_i})_{\eta_j\in H(n)}\right)\right|_v\] for any lifts $\widetilde{P_1},\dots,\widetilde{P_{c(n)}}$ of $P_1,\dots,P_{c(n)}\in A(\mathbb{C}_v)$ respectively, we further have that by Theorem \ref{thm:basisbound}, there is a $C_1'=C_1'(A,f)>0$ such that for each $v\in M_K$, for any $n\ge 2$ and any $P_1,\dots,P_{c(n)}\in A(\mathbb{C}_v)$, \begin{equation}\label{eqn:energylb} g_{n,v}\left(P_1,\dots,P_{c(n)}\right)\ge-\frac{C_1'\log n}{n}.\end{equation} If $v\in M_K$ is a place of good reduction for $f$, and if $P_1,\dots,P_{c(n)}\in A(\mathbb{C}_v)$ are such that $\textup{det}\left(\eta_j(\widetilde{P_i})_{\eta_j\in H(n)}\right)\ne0$, then by Remark \ref{rmk:KS}, \begin{equation}\label{eqn:goodred}g_{n,v}\left(P_1,\dots,P_{c(n)}\right)\ge0.\end{equation} By (\ref{eqn:energylb}) and (\ref{eqn:goodred}), there is a constant $C_1=C_1(A,f)$ such that for all $n\ge 2$, \begin{equation}\label{C1}\sum_{v\in M_K}\min\left\{\min_{P_1,\dots,P_{c(n)}\in A(\mathbb{C}_v)}g_{n,v}\left(P_1,\dots,P_{c(n)}\right),0\right\}\ge-\frac{C_1\log n}{n}.\end{equation} For each $v\in M_K$, fix an embedding of $\overline{K_v}$ into $\mathbb{C}_v$. If for each $n\ge 2$ we let $E_n=\{R_1,\dots,R_{c(n)}\}$ be a set of $c(n)$ torsion points of $A(\overline{K})$ such that for some (hence any) lifts $\widetilde{R_i}\in\mathbb{A}^{N+1}(\overline{K})$ of the $R_i$, \[\textup{det}\left(\eta_j(\widetilde{R_i})\right)_{\eta_j\in H(n)}\ne0\] (which we may achieve by the Zariski density of the set of torsion points), then by (\ref{eqn:goodred}), \[g_{n,v}\left(R_1,\dots,R_{c(n)}\right)\ge 0\] for all good places $v$ of $f$. Moreover, by (\ref{C1}), there is a $C_1'=C_1'(A,f)$ such that \[g_{n,v}\left(R_1,\dots,R_{c(n)}\right)\ge-\frac{C_1'\log n}{n}\] for all bad places $v$ of $f$. Since the $R_i$ are torsion, the product formula thus forces, for each $v\in M_K$, \begin{equation}\label{eqn:torsionlimit}\lim_{n\to\infty}\left|\textup{det}\left(\eta_j(\widetilde{R_i})\right)_{\eta_j\in H(n)}\right|_v^{1/(n\cdot c(n))}=1\end{equation} for any lifts $\widetilde{R_i}\in\mathbb{A}^{N+1}(\mathbb{C}_v)$ of the $R_i$ such that $\widetilde{R_i}\in\mathcal{K}_v$. In particular, by (\ref{eqn:corcapHn}), for each $v\in M_K$, any sequence of such sets \[E_n=\{R_1,\dots,R_{c(n)}\}\subset A(\overline{K})_{\textup{tors}}\] equidistributes to the distinguished measure on $A_v^{\textup{an}}$ that is the subject of Theorem \ref{thm:eqmsreuniqueness}.

On the other hand, let $v_0$ be a place of bad reduction of $A$ (which exists by our assumption on $P$). The sets of $n$-torsion points of $A(\overline{K})$ equidistribute to the so-called canonical measure $\mu_{\textup{can}}$ on $A_{v_0}^{\textup{an}}$ \cite[Theorem 3.1]{ChambertLoir}. Gubler showed that $\mu_{\textup{can}}$ is equal to the Haar measure on the skeleton of $A_{v_0}^{\textup{an}}$ \cite{Gubler}, which is topologically a real torus. As $v_0$ is a place of bad reduction of $A$, the skeleton is positive-dimensional. We also have that any two points of $A(K_{v_0})$ lying in the identity component of the N\'{e}ron model $\mathcal{A}_{v_0}$ of $A\times_KK_{v_0}$ retract to the same point $\xi$ of the skeleton $\Sigma_{v_0}$ of $A_{v_0}^{\textup{an}}$ (cf.~\cite[pp.~13-14]{DeJongShokrieh}). Let \[U=\{P\in\pi_{v_0}(\mathcal{K}_{v_0})\cap A(\mathbb{C}_{v_0}): P\textup{ retracts to }\xi\}.\] We claim that Theorem \ref{thm:eqmsreuniqueness} together with (\ref{eqn:corcapHn}) and (\ref{eqn:torsionlimit}) implies that there is a sequence $\{n_l\}_{l=1}^\infty$ of integers $n_1<n_2<\dots$ such that \begin{equation}\label{eqn:nj}\lim_{l\to\infty} d_{H(n_l)}(\mathcal{K}_{v_0}\cap\pi_{v_0}^{-1}(U))\end{equation} exists and satisfies \begin{equation}\label{eqn:inequality}\lim_{l\to\infty} d_{H(n_l)}(\mathcal{K}_{v_0}\cap\pi_{v_0}^{-1}(U))<\lim_{l\to\infty}d_{H(n_l)}(\mathcal{K}_{v_0}\cap\pi_{v_0}^{-1}(A_{v_0}))=\lim_{n\to\infty}d_{H(n)}(\mathcal{K}_{v_0}\cap\pi_{v_0}^{-1}(A_{v_0})).\end{equation} Indeed, by (\ref{eqn:corcapHn}), (\ref{eqn:torsionlimit}), and the preceding discussion surrounding $\mu_{\mathrm{can}}$, it follows from Theorem \ref{thm:eqmsreuniqueness} that any sequence $\{E_n\}_{n=1}^\infty$ of sets $E_n=\{E_{n,1},\dots,E_{n,c(n)}\}\subseteq A(\mathbb{C}_{v_0})$ with $|E_n|=c(n)$ that satisfies \begin{equation}\label{eqn:liftlim}\lim_{n\to\infty}\left|\mathrm{det}\left(\eta_j(\widetilde{E_{n,i}})\right)_{\eta_j\in H(n)}\right|_{v_0}^{1/(n\cdot c(n))}=1\end{equation} for some choice of lifts $\widetilde{E_{n,i}}\in\mathcal{K}_{v_0}$ of $E_{n,i}$ must equidistribute to the Haar measure on the skeleton of $A_{v_0}^{\textup{an}}$. For a subsequential limit of $\{d_{H(n)}(\mathcal{K}_{v_0}\cap\pi_{v_0}^{-1}(U))\}_{n=1}^\infty$ given by the indices $\{n_l\}_{l=1}^\infty$ (such a subsequential limit must exist by the compactness of the interval $[0,R]\subset\mathbb{R}$ for $R$ the supnorm diameter of $\mathcal{K}_{v_0}\cap\pi_{v_0}^{-1}(A)$), let \[d(n)=\begin{cases} d_{H(n_l)}(\mathcal{K}_{v_0}\cap\pi_{v_0}^{-1}(U)) & \textup{ when }n=n_l\text{ for }l\in\mathbb{Z}_+\\ d_{H(n)}(\mathcal{K}_{v_0}) & \textup{ otherwise}.\end{cases}\] For integers $n$ of the form $n=n_l$, let \begin{equation}\label{eqn:En}E_n=\{E_{n,1},\dots,E_{n,c(n)}\}\end{equation} be an arbitrary order $c(n)$ subset of $U$. For the set $\mathcal{N}$ of positive integers \textbf{not} of the form $n_l$, let $\{E_n\}_{n\in\mathcal{N}}$ be a sequence of order $c(n)$ subsets of $A(\mathbb{C}_{v_0})$ as in (\ref{eqn:En}) such that (\ref{eqn:liftlim}) holds for some choice of lifts $\widetilde{E_{n,i}}\in\mathcal{K}_{v_0}$ of $E_{n,i}$, where the limit is taken as $n\in\mathcal{N}$ tends to $\infty$. Since the sequence $\{E_n\}_{n\notin\mathcal{N}}$ does not equidistribute to $\mu_{\textup{can}}$, it follows from the above discussion that the sequence $\{d(n)\}_{n=1}^\infty$ does not converge. By the construction of the $d(n)$, we thus obtain (\ref{eqn:inequality}), as claimed. 

In particular, (\ref{eqn:corcapHn}) combined with (\ref{eqn:inequality}) yields \[\lim_{l\to\infty} d_{H(n_l)}(\mathcal{K}_{v_0}\cap\pi_{v_0}^{-1}(U))<1.\] Therefore there are constants $C_2=C_2(U)>0$ and $N_0=N_0(U)$ such that if $P_1,\dots,P_{c(n_l)}\in U$ and $n_l\ge N_0$, then \begin{equation}\label{C2} g_{n_l,v_0}\left(P_1,\dots,P_{c(n_l)}\right)\ge C_2.\end{equation} Suppose $P_1,\dots,P_{c(n_l)}\in A(L)$ where $[L:K]=D$, and let $n_w$ be local factors for the places $w$ above $v\in M_K$ characterized by the property that $\prod_{w\in M_K}|\alpha|_w^{n_w}=1$ for all $\alpha\in L^*$. Suppose that for $w\mid v_0$ and a fixed embedding of $A\times_KK_w$ into $A_{v_0}$, we have $P_i\in U$ for all $1\le i\le c(n_l)$. If $n_l\ge N_0$, then by (\ref{C1}) and (\ref{C2}), this implies that \begin{equation}\label{eqn:greentotallb} \sum_{w\in M_L}n_wg_{n_l,w}\left(P_1,\dots,P_{c(n_l)}\right)\ge \frac{C_2}{D}-\frac{C_1\log n_l}{n_l}.\end{equation} 

Let $P\in A(\overline{K})\setminus A(\overline{K})_{\textup{tors}}$ be defined over $K'$, where $[K':K]=D$ and let $\Phi_K$ be the component group of the N\'{e}ron model $\mathcal{A}_{v_0}$ of $A\times_KK_{v_0}$, with $\Delta:=|\Phi_K|$. Let $\{n_w\}_{w\in M_{K'}}$ be a collection of local factors for $K'$ such that $\prod_{w\in M_{K'}}|\alpha|_w^{n_w}=1$ for all $\alpha\in (K')^*$. Suppose without loss that $A$ has semistable reduction over $K$. Fix any $w\in M_{K'}$ lying above $v_0$. Let $\Phi_{K'}$ be the component group of the N\'{e}ron model of $A\times_{K}K_w$, and let $e_w$ be the ramification degree of $K_w/K_{v_0}$. By \cite[Theorem 5.3.3.9(2) and Remark 2.2.3.1]{HalleNicaise}, the exponents of $\Phi_{K'}$ and $\Phi_{K}$ satisfy \[\mathrm{exp}(\Phi_{K'})\mid e_w\cdot\textup{exp}(\Phi_K).\] (This is where we have used the assumption that $K_{v_0}$ has perfect residue field; Halle and Nicaise assume in \cite[Theorem 5.3.3.9]{HalleNicaise} that the residue field $k$ is algebraically closed. We apply their theorem to the maximal unramified extension of $K_{v_0}$, taking into account \cite[Remark 2.2.3.1]{HalleNicaise} as well as our assumption that $K_{v_0}$ has perfect residue field.) The exponent $\textup{exp}(\Phi_K)$ of $\Phi_K$ in turn satisfies \[\mathrm{exp}(\Phi_K)\mid\Delta\] for any place $w\in M_{K'}$ lying above $v_0$, where $e_w$ is the ramification index of $w$ above $v_0$. It follows that for any $w\in M_{K'}$ lying above $v_0$, the points $ke_w\Delta P$, $k\in\mathbb{Z}_+$ all retract to the point $\xi\in\Sigma_{w}=\Sigma_{v_0}$ corresponding to the identity component of the N\'{e}ron model. In particular, for any $n\in\mathbb{Z}_+$ and any $w\mid v_0$, \begin{equation}\label{eqn:keyretraction}W_n:=\{e_w\Delta P,\dots,e_w\Delta(2n^g+c(n))P\}\subseteq U.\end{equation} As we have assumed that $A$ is geometrically simple, we may apply Lemma \ref{lem:suitablemultiples} to the polarized dynamical system $(A,f)$ and the point $e_w\Delta P$. Lemma \ref{lem:suitablemultiples} implies that for any $n\ge1$, there is a $c(n)$-element subset \begin{equation}\label{eqn:Vn}V_n:=\left\{Q_1,\dots,Q_{c(n)}\right\}\end{equation} of $W_n$ such that for any lifts $\widetilde{Q}_i$ of $Q_i$, \[\textup{det}\left(\eta_j\left(\widetilde{Q_1},\dots,\widetilde{Q_{c(n)}}\right)\right)_{\eta_j\in H(n)}\ne 0.\] Thus by the product formula and (\ref{eqn:htrelation}), we have \begin{equation}\label{eqn:greentoht} \sum_{w\in M_{K'}}n_wg_{n,w}\left(Q_1,\dots,Q_{c(n)}\right)=\frac{1}{c(n)}\sum_{j=1}^{c(n)}\hat{h}_f(Q_j)=\frac{1}{c(n)}\sum_{j=1}^{c(n)}m\hat{h}_{\mathcal{L}}(Q_j)\end{equation} where $\hat{h}_f$ is the canonical height associated to $f$. 

For $n\in\mathbb{Z}_+$, write \[t_n=2n^g+c(n).\] Let $N_0$ be as in (\ref{C2}). Suppose that $\hat{h}_f(t_{N_0}e_w\Delta P)>\frac{C_2}{2D}$. Then \[\hat{h}_f(P)>\frac{C_2}{2t_{N_0}^2D\Delta^2 e_w^2}\ge\frac{C_3}{D^3}\] for some $C_3>0$, so there is nothing left to prove. Therefore we may assume that \[\hat{h}_f(t_{N_0}e_w\Delta P)\le \frac{C_2}{2D},\] i.e., that we may choose an integer $n_l\ge N_0$  in the indexing sequence defining (\ref{eqn:nj}) such that \begin{equation}\label{eqn:canhtbd} \hat{h}_f(ke_w\Delta P)\le \frac{C_2}{2D}\end{equation} for all $1\le k\le t_{n_l}$ and \begin{equation}\label{eqn:maxtnl} \hat{h}_f(t_{n_{l+1}}e_w\Delta P)>\frac{C_2}{2D}.\end{equation} For such a choice of $n_l$, and for $W_{n_l}$ and $V_{n_l}$ as in (\ref{eqn:keyretraction}) and (\ref{eqn:Vn}) respectively, the inequality (\ref{eqn:canhtbd}) yields $\hat{h}_f(Q_j)\le\frac{C_2}{2D}$ for all $1\le j\le c(n_l)$. Thus (\ref{eqn:greentoht}) implies \[\sum_{w\in M_{K'}}n_wg_{n_l,w}\left(Q_1,\dots,Q_{c(n_l)}\right)<\frac{C_2}{2D}.\] Combining this with (\ref{eqn:greentotallb}), which we may do because $Q_1,\dots,Q_{c(n_l)}\in U$ at $w$ by (\ref{eqn:keyretraction}) and we have assumed that $n_l\ge N_0$, we obtain \[\frac{C_2}{D}-\frac{C_1\log n_l}{n_l}<\frac{C_2}{2D}.\] We thus obtain \[n_l\le BD\log D\] for some constant $B$ independent of $D$. As \[\dim\,H^0(A,\iota^*(\mathcal{O}(n)))=O(n^{g})\] by \cite[Theorem 9.8(i)]{BoucksomEriksson}, this implies that \begin{equation*}\label{eqn:firstbound}t_{n_{l+1}}\le B'D^g\log^gD\end{equation*} for some constant $B'$ independent of $D$. In other words, by (\ref{eqn:maxtnl}), we have \[\hat{h}_f(e_wP)>\frac{C_2}{2D\Delta^2t_{n_{l+1}}^2}\ge\frac{C_3}{D^{2g+1}\log^{2g}\hspace{-0.5mm}D}\] for all $D\ge2$ and for some $C_3>0$ independent of $D$. From this we obtain \[\hat{h}_f(P)>\frac{C_3}{D^{2g+3}\log^{2g}\hspace{-0.5mm}D}\] whenever $D\ge2$. By the minimality of $m$ appearing in (\ref{eqn:htrelation}), applying (\ref{eqn:htrelation}) then gives \[\hat{h}_\mathcal{L}(P)>\frac{C}{D^{2g+3}\log^{2g}\hspace{-0.5mm}D}\] for a constant $C>0$ depending only on $A$ and $\mathcal{L}$.\end{proof}

\end{document}